\def\R{\mathbb R}
\def\N{\mathbb N}
\def\S{\mathbb S}
\numberwithin{equation}{section}
\newtheorem{theorem}{Theorem}[section]
\newtheorem{lemma}[theorem]{Lemma}
\newtheorem{proposition}[theorem]{Proposition}
\newtheorem{definition}[theorem]{Definition\rm}
\newcommand{\e}{\varepsilon}
\DeclareMathOperator{\diam}{diam}   
\newcommand{\K}{\mathcal{K}}
\newcommand{\C}{\mathcal{C}}
\title{\textbf{Regularity properties of the distance functions to conjugate and cut loci
                    for viscosity solutions of Hamilton-Jacobi equations and applications in Riemannian geometry}}
\author{M.~Castelpietra\footnote{Universit\'e de Nice-Sophia
    Antipolis, Labo.\ J.-A.\ Dieudonn\'e, UMR 6621, Parc
    Valrose, 06108 Nice Cedex 02, France ({\tt
      castelpietra@unice.fr})}
\and
L.~Rifford\footnote{Universit\'e de Nice-Sophia
    Antipolis, Labo.\ J.-A.\ Dieudonn\'e, UMR 6621, Parc
    Valrose, 06108 Nice Cedex 02, France ({\tt
      rifford@unice.fr})}}
\date{}
\begin{document}

\maketitle

\begin{abstract}
Given a continuous viscosity solution of a Dirichlet-type Hamilton-Jacobi equation, we show that the distance function to the conjugate locus which is associated to this problem is locally semiconcave on its domain. It allows us to  provide a simple proof of the fact that  the distance function to the cut locus associated to the problem is locally Lipschitz on its domain. This result, which was already an improvement of a previous one by Itoh and Tanaka \cite{it01}, is due to Li and Nirenberg \cite{ln05}. Finally, we give applications of our results in Riemannian geometry. Namely, we show that the distance function to the conjugate locus on a Riemannian manifold is locally semiconcave. Then, we show that if a Riemannian manifold is a $C^4$ small perturbation of the round sphere, then all its tangent nonfocal domains are strictly uniformly convex.
\end{abstract}

\section{Introduction}

\subsection{}

Let $H: \R^n \times \R^n \rightarrow \R$ (with $n\geq 2$) be an Hamiltonian of
class $C^{k,1}$ (with $ k\geq 2$) which satisfies the three
following conditions:
\begin{itemize}
    \item[(H1)] (Uniform superlinearity) For every $K\geq 0$, there is $C(K)< \infty$ such that
    $$
    H(x,p) \geq K |p| - C(K) \qquad \forall (x,p)\in \R^n \times \R^n.
    $$
    \item[(H2)] (Strict Convexity in the adjoint variable) For every $(x,p) \in \R^n \times \R^n$,
    the second derivative $\frac{\partial^2 H}{\partial p^2} (x,p)$ is positive definite.
    \item[(H3)] For every $x\in \R^{n}, H(x,0)<0$.
\end{itemize}
Let $\Omega$ be an open set in $\R^n$ with compact boundary,
denoted by $S=\partial \Omega$, of class $C^{k,1}$. We are interested in the viscosity solution of the following Dirichlet-type
Hamilton-Jacobi equation
\begin{eqnarray}
\label{HJ}
\left\{ \begin{array}{rl}
H(x,du(x)) = 0, & \qquad \forall x \in \Omega,\\
u(x) = 0, & \qquad \forall x \in \partial \Omega.
\end{array} \right.
\end{eqnarray}

We recall that if $u:\Omega \rightarrow \R$ is a continuous function, its \textit{viscosity subdifferential}
 at $x\in \Omega$ is the convex subset of $\R^n$ defined by
$$
D^-u(x) := \left\{ d\psi (x) \mid  \psi \in C^{1}(\Omega) \mbox{ and }
  u-\psi \mbox{ attains a global minimum at } x\right\},  
$$
while its \textit{viscosity superdifferential} at $x$ is the convex subset of $\R^n$ defined by
$$
D^+u(x) := \left\{ d\phi (x) \mid \phi \in C^{1}(\Omega) \mbox{ and }
  u-\phi \mbox{ attains a global maximum at } x\right\}.
$$
Note that if $u$ is differentiable at $x\in \Omega$, then $D^-u(x) = D^+u(x) =\{du(x)\}$. A continuous function  $u:\Omega \rightarrow \R$ is said to be a \textit{viscosity subsolution} of $H(x,du(x))$ on $\Omega$ if the following property is satisfied:
$$
H(x,p) \leq 0 \qquad \forall x\in U, \quad \forall p \in D^+u(x).
$$
Similarly, a continuous function $u:\Omega \rightarrow \R$ is a said to be a \textit{viscosity supersolution} of $H(x,du(x))$ on $\Omega$ if 
$$
H(x,p) \geq 0 \qquad \forall x\in U, \quad \forall p \in D^- u(x).
$$
A  continuous function $u:\Omega \rightarrow \R$ is called a \textit{viscosity solution} of (\ref{HJ}) if it satisfies the boundary condition $u=0$ on $S$, and if it is both a viscosity subsolution and a viscosity supersolution of $H(x,du(x))=0$ on $\Omega$. The purpose of the present paper is first to study the distance functions to the cut and conjugate loci  associated with the (unique) viscosity solution of (\ref{HJ}).

\subsection{} 

The Lagrangian $L: \R^{n} \times \R^{n} \rightarrow \R$ which is
associated to $H$ by Legendre-Fenchel duality is defined by,
$$
L(x,v) := \max_{p\in
\R^{n}} \left\{ \langle p,v\rangle -H(x,p) \right\} \qquad \forall (x,v) \in \R^{n} \times \R^{n}.
$$
It is of class $C^{k,1}$ (see \cite[Corollary A.2.7 p.
287]{cs04}) and satisfies the properties of
uniform superlinearity and strict convexity in $v$. For every $x,y
\in \overline{\Omega}$ and $T\geq 0$, denote by  $\Omega_T (x,y)$ the set of
locally Lipschitz curves $\gamma : [0,T] \rightarrow
\overline{\Omega}$ satisfying $\gamma(0)=x$ and $\gamma (T)=y$. Then, set 
$$
l(x,y) := \inf \left\{ \int_0^T L(\gamma(t),\dot{\gamma}(t))dt \
\vert \ T \geq 0, \gamma \in \Omega_T (x,y)\right\}.
$$
The viscosity solution of (\ref{HJ}) is unique and can be characterized as follows:

\begin{proposition}
\label{PROP1}
The function $u:\overline{\Omega} \rightarrow \R$ given by
\begin{eqnarray}
\label{DEFu}
u(x) := \inf \left\{ l(y,x) \ \vert \ y \in \partial \Omega \right\}, \quad \forall x \in \overline{\Omega},
\end{eqnarray}
is well-defined and continuous on $\overline{\Omega}$. Moreover,
it is the unique viscosity solution of (\ref{HJ}).
\end{proposition}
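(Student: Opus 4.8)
The plan is to recognize $u$ as the value function of the state-constrained variational problem attached to $L$ in $\overline{\Omega}$ with zero exit cost on $S$, and then to run the classical dynamic-programming argument. (A reader willing to quote Cannarsa--Sinestrari \cite{cs04} could instead invoke their results on such value functions directly, but here is how I would organize a self-contained proof.) First I would record the basic properties of the action $l$: it is finite, nonnegative, satisfies the triangle inequality $l(x,z)\le l(x,y)+l(y,z)$ with $l(y,y)=0$ (allowing $T=0$), and is locally Lipschitz on $\overline{\Omega}\times\overline{\Omega}$. Nonnegativity and finiteness come from (H3): by Legendre duality $L(x,v)\ge\langle 0,v\rangle-H(x,0)=-H(x,0)>0$, so every admissible action is positive and bounded below, ruling out $l\equiv-\infty$; local upper bounds for $l$ come from joining nearby points by short unit-speed segments — kept inside $\overline{\Omega}$ near $S$ thanks to the $C^{k,1}$ regularity of $S$ — together with the boundedness of $L$ on compact sets, and the triangle inequality is immediate by concatenation. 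Since $S$ is compact and $y\mapsto l(y,x)$ is continuous, the infimum defining $u(x)$ is finite (and attained), so $u$ is well defined; it is locally Lipschitz as the infimum of the locally equi-Lipschitz family $\{l(y,\cdot)\}_{y\in S}$; and $0\le u(y)\le l(y,y)=0$ gives $u\equiv0$ on $S$.

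Next I would prove a dynamic programming principle: for each $x\in\Omega$ there is $t_0(x)>0$ such that, for all $0<t\le t_0(x)$,
\[
u(x)=\inf\Big\{u(\gamma(0))+\int_0^t L(\gamma(s),\dot\gamma(s))\,ds\ :\ \gamma:[0,t]\to\overline{\Omega}\ \text{Lipschitz},\ \gamma(t)=x\Big\}.
\]
The inequality ``$\le$'' is just $u(x)\le u(z)+l(z,x)$ (triangle inequality plus the definition of $u$), applied with $z=\gamma(0)$. For ``$\ge$'', take a near-minimizer $\gamma:[0,T]\to\overline{\Omega}$ for $u(x)$ joining some $y\in S$ to $x$; because such a curve covers at least $\mathrm{dist}(x,S)>0$, uniform superlinearity (H1) forces its duration to satisfy $T\ge t_0(x)>0$, so for $t\le t_0(x)$ one may truncate at time $T-t$ and use $u(\gamma(T-t))\le l(y,\gamma(T-t))\le\int_0^{T-t}L$.

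From the DPP the sub/supersolution properties follow in the usual way. If $u-\phi$ has an interior maximum at $x_0$, then for $v\in\R^n$ one tests ``$\le$'' with the segment $\gamma(s)=x_0+(s-t)v$ (which lies in $\Omega$ for $t$ small) and lets $t\to0^+$ to get $\langle d\phi(x_0),v\rangle\le L(x_0,v)$; taking the supremum over $v$ and using that $H(x,\cdot)$ is the Fenchel conjugate of $L(x,\cdot)$ (by (H1)--(H2) and the Fenchel--Moreau theorem) yields $H(x_0,d\phi(x_0))\le0$. If $u-\psi$ has an interior minimum at $x_0$, I would take near-minimizers $\gamma_t$ of the DPP; a superlinearity estimate combined with the local Lipschitz bound on $u$ shows $\mathrm{length}(\gamma_t)\to0$, hence $\gamma_t\to x_0$ uniformly, and then the minimum inequality together with $\psi(x_0)-\psi(\gamma_t(0))=\int_0^t\langle d\psi(\gamma_t(s)),\dot\gamma_t(s)\rangle\,ds$ and the pointwise bound $\langle p,w\rangle-L(x,w)\le H(x,p)$ give $\frac1t\int_0^t H(\gamma_t(s),d\psi(\gamma_t(s)))\,ds\ge-o(1)$, whence $H(x_0,d\psi(x_0))\ge0$ in the limit. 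Finally, uniqueness is the standard comparison principle for (\ref{HJ}) under (H1)--(H3) with $C^{k,1}$ data (see \cite{cs04}).

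\textbf{Main obstacle.} The delicate point is the interaction of the state constraint $\gamma\subset\overline{\Omega}$ with the geometry near $S$: the $C^{k,1}$ regularity of $S$ is what makes the local Lipschitz estimates for $l$ valid up to the boundary and lets one dismiss interior points close to $S$; and since the infimum in the dynamic programming principle need not be attained a priori, the argument must be carried out with near-minimizers, the required compactness and a priori length/duration bounds coming once more from uniform superlinearity (H1).
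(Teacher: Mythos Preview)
Your sketch is correct and, in fact, supplies considerably more detail than the paper does: the paper gives no proof of this proposition at all, simply declaring the well-definedness and continuity ``easy and left to the reader,'' citing Lions \cite{lions82} for the fact that the value function is a viscosity solution, and referring to Barles--Perthame \cite{bp88} and Ishii \cite{ishii87} for uniqueness. Your dynamic-programming argument is exactly the standard machinery underlying those references, so there is no divergence in approach---you have just unpacked what the paper leaves black-boxed. The one place where you are slightly less precise than the paper is uniqueness: the authors explicitly flag that uniqueness for this Dirichlet/exit-time problem is ``less classical'' and hinges on (H3), pointing to \cite{bp88,ishii87} rather than a generic comparison principle; your appeal to a ``standard comparison principle'' in \cite{cs04} glosses over the role of (H3) (which, via $L>0$, rules out degenerate boundary behavior and makes the exit-time problem well posed). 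Otherwise your proposal is sound, including the lower bound on the duration of near-minimizers via superlinearity and the handling of the state constraint near $S$.
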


The fact that $u$ is well-defined and continuous is easy and left
to the reader. The fact that the function $u$ given by (\ref{DEFu}) is a viscosity
solution of (\ref{HJ}) is a standard result in viscosity theory
(see \cite[Theorem 5.4 p. 134]{lions82}). The fact that, thanks to
(H3), $u$ is indeed the unique viscosity solution is less
classical; we refer the reader to \cite{bp88,ishii87} for its proof.

\subsection{}

Before giving in the next paragraph a list of properties satisfied by the viscosity solution of (\ref{HJ}), we recall some notions of nonsmooth analysis. \\

A function $u:\Omega \rightarrow \R$ is called \textit{locally semiconcave} on $\Omega$ if for every $\bar{x} \in \Omega$, there exist  $C,\delta >0$ such that
\begin{equation*}
\mu u(y)+ (1-\mu) u(x) -u(\mu x +(1-\mu) y) \leq \mu (1-\mu) C |x-y|^2,
\end{equation*}
for all $x,y$ in the open ball $B (\bar{x},\delta) \subset \Omega$ and every $\mu \in [0,1]$. Note that every locally semiconcave function is locally Lipschitz on its
domain, and thus, by Rademacher's Theorem, is differentiable almost everywhere on its domain. A way to prove that a given function  $u:\Omega \rightarrow \R$ is locally semiconcave on $\Omega$ is to show that, for  every $\bar{x} \in \Omega$, there exist a
 $\sigma, \delta >0$  such that, for every  $x\in B(\bar{x},\delta) \subset \Omega$,
    there is $p_{x} \in \R^n$ such that
    \begin{equation*}
    u(y) \leq u(x) + \langle p_{x}, y-x\rangle + \sigma |y-x|^2 \qquad \forall y \in B(\bar{x},\delta).
    \end{equation*}
We refer the reader to \cite{rifford08,riffordbook} for the proof of this fact. \\

If $u:\Omega \rightarrow \R$ is a continuous function, its \textit{limiting subdifferential} at $x\in \Omega$ is the subset of $\R^n$ defined by
$$
\partial_{L} u(x) := \left\{ \lim_{k \rightarrow \infty} p_{k} \mid p_{k}
\in D^-u(x_{k}), \,x_{k} \rightarrow x \right\}.
$$
By construction, the graph of the limiting subdifferential is closed
in $\R^n \times \R^n$. Moreover, the function $u$ is locally Lipschitz on $\Omega$ if and only if the graph of the  limiting subdifferential of $u$ is locally bounded (see \cite{clsw98,riffordbook}). \\

Let $u:\Omega \rightarrow \R$ be a locally Lipschitz function. The \textit{Clarke generalized differential} (or simply \textit{generalized gradient}) of
$u$ at the point $x\in \Omega$ is the nonempty compact convex subset of $\R^n$ defined by
$$
\partial u(x) := \mbox{conv} \left(\partial_L u(x)\right),
$$
that is, the convex hull of the limiting subdifferential of $u$ at $x$. Notice that, for every $x\in \Omega$,
$$
D^-u(x) \subset \partial_{L}u(x) \subset \partial u(x) \qquad \mbox{and} \qquad D^+u(x) \subset \partial u(x).
$$
It can be shown that, if $\partial u(x)$ is a singleton, then  $u$ is differentiable at $x$ and $\partial u(x) =\{du(x)\}$. The converse result is false. \\

Let $u:\Omega \rightarrow \R$ be a function which is locally semiconcave on $\Omega$.  It can be shown (see \cite{cs04,riffordbook}) that for every $x\in \Omega$ and every $p\in D^+u(x)$, there are $C,\delta>0$ such that 
$$
 u(y) \leq u(x) + \langle p,y-x\rangle + \frac{C}{2} |y-x|^2 \qquad \forall y \in B(x,\delta) \subset \Omega,
$$
In particular, $D^+u(x) = \partial u(x)$ for every $x\in \Omega$. The \textit{singular set}
of $u$ is the subset of $\Omega$ defined by  
\begin{align*}
\Sigma(u)  := & \left\{ x\in \Omega \mid u \mbox{ is not differentiable at } x \right\} \\
= & \left\{ x\in \Omega \mid \partial u(x) \mbox{ is not a singleton} \right\} \\
= & \left\{ x\in \Omega \mid \partial_L u(x) \mbox{ is not a singleton} \right\} .
\end{align*}
From Rademacher's theorem, $\Sigma (u)$ has Lebesgue measure zero. In fact, the following result holds (see \cite{cs04,riffordbook}):

\begin{theorem}
\label{THMsingset}
Let $\Omega$ be an open subset of $M$. The singular set of a locally
semiconcave function $u:\Omega \rightarrow \R$ is countably $(n-1)$-rectifiable, \textit{i.e.}, is contained in a countable union of locally Lipschitz hypersurfaces of $\Omega$.
\end{theorem}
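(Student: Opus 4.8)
The plan is to reduce the statement to a local, one-dimensional-fiber argument and then patch. Fix $\bar x\in\Omega$ and use local semiconcavity to choose $C,\delta>0$ so that, on the ball $B(\bar x,\delta)$, the map $x\mapsto u(x)-\frac{C}{2}|x|^2$ is concave; equivalently $v:=-u+\frac{C}{2}|\cdot|^2$ is convex on $B(\bar x,\delta)$. Since differentiability of $u$ and of $v$ coincide, it suffices to prove that the singular set of a finite convex function on a convex open set is countably $(n-1)$-rectifiable, and then take a countable cover of $\Omega$ by such balls (a countable union of countably $(n-1)$-rectifiable sets is again countably $(n-1)$-rectifiable). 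This is the standard device by which semiconcave questions are transferred to convex ones, and I would state it explicitly as the first step.

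For the convex case I would argue as follows. Let $v:B\to\R$ be convex, $B\subset\R^n$ open and convex, and let $\Sigma$ be the set where $v$ is not differentiable. For a direction $e\in\S^{n-1}$ and $x\in B$, the one-sided directional derivatives $D^\pm_e v(x)$ exist, and nondifferentiability of $v$ at $x$ is detected by $\partial v(x)$ not being a singleton, i.e.\ by the support function of $\partial v(x)$ being non-smooth in some direction. The key classical fact is that a convex function of \emph{one} real variable is differentiable outside a countable set. Writing $B$ (up to rotation and a countable decomposition into boxes) as $B'\times I$ with $B'\subset\R^{n-1}$ open and $I$ an open interval, for each fixed $y\in B'$ the function $t\mapsto v(y,t)$ is convex on $I$, hence fails to be differentiable in $t$ at only countably many points; by Fubini, the set $N_n$ of points of $B$ where $\partial_n v$ fails to exist has Lebesgue measure zero. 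But more is true: one shows $N_n$ is contained in a countable union of Lipschitz graphs over $B'$. This is because for each rational level $\lambda$ the set $\{x: D^+_{e_n}v(x)-D^-_{e_n}v(x)>\lambda\}$ — or rather its closure properties combined with monotonicity of $t\mapsto D^\pm_{e_n}v(y,t)$ — can be organized into graphs; more cleanly, one uses the fact (see e.g.\ Zaj\'\i\v{c}ek, or Alberti--Ambrosio) that the non-differentiability set of a convex function is a countable union of $C^1$, hence Lipschitz, hypersurfaces. Taking the union over $e$ ranging in a countable dense set of directions, and noting that $\Sigma$ is exactly where $\partial v(x)$ is not a singleton — which happens iff for some such $e$ the two one-sided derivatives differ — gives that $\Sigma$ is countably $(n-1)$-rectifiable.

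Alternatively, and this is probably the cleanest route to write, I would invoke the characterization via the normal/proximal structure: for a locally semiconcave $u$, a point $x$ is singular iff $\partial u(x)=D^+u(x)$ has positive dimension, and one stratifies $\Sigma(u)=\bigcup_{k\ge 1}\Sigma^k(u)$ where $\Sigma^k(u)$ is the set of $x$ with $\dim\partial u(x)\ge k$. On $\Sigma^k(u)$ one produces, using the fact that through $x$ there is a nontrivial generalized gradient, a $k$-dimensional ``exposed face'' structure, and a theorem of the Alberti--Ambrosio--Cannarsa type (or Zaj\'\i\v{c}ek's theorem on singularities of convex functions, transported via the convexity reduction above) shows $\Sigma^k(u)$ is countably $(n-k)$-rectifiable; in particular $\Sigma(u)=\Sigma^1(u)$ is countably $(n-1)$-rectifiable. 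Since the references \cite{cs04,riffordbook} are explicitly allowed and contain exactly this, the write-up can legitimately be: perform the convexity reduction, then cite Zaj\'\i\v{c}ek's theorem (or Theorem in \cite{cs04}) that the singular set of a convex function is contained in a countable union of Lipschitz hypersurfaces, and conclude by the countable-union stability of rectifiability.

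The main obstacle is the genuine content behind ``singular set of a convex function is a countable union of Lipschitz hypersurfaces'': the measure-zero statement is elementary via Fubini and the one-variable fact, but upgrading ``measure zero'' to ``$(n-1)$-rectifiable'' requires the monotonicity-of-the-jump argument that organizes the countably many bad points on each fiber into Lipschitz graphs, uniformly enough to glue. I would handle this by citing it rather than reproving it, since it is classical (Zaj\'\i\v{c}ek 1979) and is stated in the references the excerpt permits; the only thing that must be done carefully in the paper is the reduction from local semiconcavity to convexity and the observation that $\Sigma(u)$ is exactly the non-differentiability set, both of which are immediate from the definitions recalled above.
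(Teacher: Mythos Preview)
Your proposal is correct, and in fact it goes further than the paper does: the paper does not prove Theorem~\ref{THMsingset} at all, but simply states it and refers the reader to \cite{cs04,riffordbook} for the proof. So there is no ``paper's own proof'' to compare against beyond a bare citation.

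That said, the argument you sketch is precisely the one found in those references. The reduction step (locally, $-u+\tfrac{C}{2}|\cdot|^2$ is convex, and the singular sets of $u$ and of this convex function coincide) is the standard device, and the substantive content is indeed Zaj\'{\i}\v{c}ek's theorem that the non-differentiability set of a convex function on $\R^n$ is contained in a countable union of Lipschitz (in fact DC) hypersurfaces. Your stratification remark via $\Sigma^k(u)=\{x:\dim\partial u(x)\ge k\}$ being countably $(n-k)$-rectifiable is also exactly what appears in \cite{cs04}. The only point where your write-up is slightly loose is the Fubini paragraph: as you yourself note, the one-variable argument gives measure zero but not rectifiability, and the upgrade genuinely requires the monotone-jump organization into Lipschitz graphs; since you end by citing Zaj\'{\i}\v{c}ek for this, the proposal is sound as written.
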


As we shall see, the Li-Nirenberg Theorem (see Theorem \ref{THMcut}) allows to prove that $\Sigma(u)$ has indeed finite $(n-1)$-dimensional Hausdorff measure. 

\subsection{}

From now on, $u:\Omega \rightarrow \R$ denotes the unique viscosity solution of (\ref{HJ}).  Let us collect some properties satisfied by $u$:
\begin{itemize}
\item[(P1)] The function $u$ is locally semiconcave on $\Omega$.
\item[(P2)] The function $u$ is $C^{k,1}$ in a neighborhood of $S$ (in $\Omega$).
\item[(P3)] The function $u$ is $C^{k,1}$ on the open set $\Omega \setminus \overline{\Sigma(u)}$.
\item[(P4)] For every $x\in \Omega$ and every $p\in \partial_L
u(x)$, there are $T_{x,p} >0$ and a curve $\gamma_{x,p} :
[-T_{x,p},0] \rightarrow \R$ such that $\gamma_{x,p}(-T_{x,p}) \in
S$ and, if $(x,p):[-T_{x,p},0] \rightarrow \R^n \times \R^n$
denotes the solution to the Hamiltonian system
\begin{eqnarray*}
\left\{ \begin{array}{ccc}
\dot{x}(t) & = & \frac{\partial H}{\partial p} (x(t),p(t))\\
\dot{p}(t) & = & - \frac{\partial H}{\partial x}(x(t),p(t))
\end{array}
\right.
\end{eqnarray*}
with initial conditions $x (0) =x, p(0)=p$, then we have
$$
\gamma_{x,p} (t) = x(t) \quad  \mbox{and}  \quad du
(\gamma_{x,p}(t)) = p(t), \quad \forall t \in [-T_{x,p},0],
$$
which implies that
$$
u(x) - u(\gamma_{x,p}(t)) = \int_t^{0} L\left(
\gamma_{x,p}(s),\dot{\gamma}_{x,p}(s)\right) ds, \quad \forall t
\in [-T_{x,p},0].
$$
\item[(P5)] For every $T>0$ and every locally Lipschitz curve $\gamma
:[-T,0] \rightarrow \overline{\Omega}$ satisfying $\gamma(0)=x$,
$$
u(x) - u(\gamma(-T)) \leq \int_{-T}^{0} L\left( \gamma (s),\dot{\gamma} (s)\right) ds.
$$
\item[(P6)] As a consequence, we have for every $x\in \Omega$,
every $p\in \partial_L u(x)$, every $T>0$, and every locally Lipschitz
curve $\gamma :[-T,0] \rightarrow \overline{\Omega}$ satisfying
$\gamma(0)=x$ and $\gamma(-T) \in \partial \Omega$,
$$
\int_{-T_{x,p}}^{0} L\left(
\gamma_{x,p}(t),\dot{\gamma}_{x,p}(t)\right) dt \leq \int_{-T}^{0}
L\left( \gamma (s),\dot{\gamma} (s)\right) ds.
$$
\item[(P7)] If $x\in \Omega$ is such that $u$ is $C^{1,1}$ in a neighborhood of $x$, then for every $t< 0$, the function $u$ is $C^{1,1}$ in a neighborhood of $\gamma_{x,p}(t)$ (with $p=du(x)$). 
\end{itemize}

The proof of (P1) can be found in \cite{rifford08}. Properties (P2)-(P3) are straighforward consequences of the method of characteristics (see \cite{cs04}). Properties (P4)-(P6) taken together give indeed a characterization of the fact that $u$ is a viscosity solution of (\ref{HJ}) (see for instance \cite{fathibook,riffordbook}). Finally the proof of (P7) can be found in \cite{rifford08}.

\subsection{}

We proceed now to define the exponential mapping associated to our Dirichlet problem. Let us denote by $\phi_t^H$ the Hamiltonian flow acting on $\R^n \times \R^n$. That is, for every $x,p \in \R^n \times \R^n$, the function $t \mapsto \phi^H_t (x,p)$ denotes the solution to  
\begin{eqnarray}
\label{sysH}
\left\{ \begin{array}{ccc}
\dot{x}(t) & = & \frac{\partial H}{\partial p} (x(t),p(t))\\
\dot{p}(t) & = & - \frac{\partial H}{\partial x}(x(t),p(t))
\end{array}
\right.
\end{eqnarray}
satisfying the initial condition $\phi_0^H(x,p)=(x,p)$. Denote by $\pi:\R^n \times \R^n \rightarrow \R^n$ the projection on the first coordinates $(x,p)\mapsto x$.  The \textit{exponential} from $x \in S$ in time $t\geq 0$ is defined as  
$$
\exp(x,t) := \pi \left( \phi_t^H (x,du(x)) \right).
$$
Note that, due to blow-up phenomena, $\exp(x,t)$ is not necessarily defined for any $t\geq 0$. For every $x\in S$, we denote by $T(x)\in (0,+\infty)$ the maximal positive time such that $\exp(x,t)$ is defined on $[0,T(x))$. The function $(x,t)\mapsto \exp(x,t)$ is of class $C^{k-1}$ on its domain.

\begin{definition}
For every $x\in S$, we denote by $t_{conj}(x)$, the first time $t\in (0,T(x))$ such that $d \exp(x,t)$ is singular. The function $t_{conj}:S \rightarrow (0,+\infty) \cup \{+\infty\}$ is called the distance function to the conjugate locus. The set of $x\in S$  such that $t_{conj}(x)<\infty$ is called the domain of $t_{conj}$.
\end{definition}

Note that, if  $d \exp(x,t)$ is nonsingular for every $t\in (0,T(x))$, then $t_{conj}(x)=+\infty$. Furthermore, by (H3), we always have $\frac{\partial}{\partial t}\exp(x,t) \neq 0$. Therefore, it could be shown that for any $x \in S$ and any $t\in (0,T(x))$, $d\exp(x,t)$ is singular if and only if  $\frac{\partial  \exp}{\partial x} (x,t)$ is singular.

\begin{theorem}
\label{THMconjLIP}
 Assume that $H$ and $S=\partial
\Omega$ are of class $C^{2,1}$. Then, the domain of $t_{conj}$ is open and the function $x\mapsto
t_{conj}(x)$ is locally Lipschitz on its domain.
\end{theorem}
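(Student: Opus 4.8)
\noindent
The plan is to linearize the Hamiltonian flow along the characteristics and to extract, near a given conjugate point, a \emph{monotone} one‑parameter family of symmetric matrices whose determinant detects conjugacy; the monotonicity is forced by the strict convexity (H2), and it yields simultaneously openness of the domain and the Lipschitz estimate.

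Fix $\bar x\in S$ in the domain of $t_{conj}$, set $t_0:=t_{conj}(\bar x)<T(\bar x)$, and choose local coordinates $\xi$ on $S$ near $\bar x$. Along the trajectory $t\mapsto \phi_t^H(x(\xi),du(x(\xi)))$ introduce the Jacobi frame obtained by differentiating $\phi_t^H$ at $(x(\xi),du(x(\xi)))$ in the $n-1$ directions tangent to $\mathrm{Graph}(du)$ coming from $S$, together with the Hamiltonian direction $X_H$; projecting on the base variables gives an $n\times n$ matrix $A_\xi(t)$ which is exactly the matrix of $d\exp(\xi,t)$, so $d\exp(\xi,t)$ is singular iff $\det A_\xi(t)=0$ (this is the observation following the definition of $t_{conj}$, valid because $\p_t\exp\neq0$ by (H3)). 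Let $B_\xi(t)$ be the companion ``momentum'' block, so that $(A_\xi,B_\xi)$ solves the linearized Hamiltonian system. Since $H\in C^{2,1}$ and $S\in C^{2,1}$, the Hamiltonian vector field is $C^{1,1}$ and, by (P2), $du$ is $C^{1,1}$ on $S$; hence $(\xi,t)\mapsto(A_\xi(t),B_\xi(t))$ is jointly Lipschitz on a neighbourhood of $(\bar\xi,t_0)$ and, for fixed $\xi$, of class $C^1$ in $t$ with $t$‑derivative given by the linearized system. Finally, as $\phi_t^H$ is symplectic and $\mathrm{Graph}(du)$ is Lagrangian, $A_\xi(t)^\top B_\xi(t)$ is symmetric and the $2n\times n$ frame $[A_\xi(t);B_\xi(t)]$ has rank $n$ for all $t$; and since $T$ is lower semicontinuous, $T(\xi)>t_0+\e_0$ for $\xi$ near $\bar x$ and some $\e_0>0$.

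Two observations follow quickly. First, \emph{lower semicontinuity of $t_{conj}$}: $t_0$ being the first singular time, $\det A_{\bar\xi}(\cdot)$ does not vanish on $[0,t_0)$ — at $t=0$ because $\p_pH(\bar x,du(\bar x))$ is transverse to $T_{\bar x}S$ by (H2)--(H3) — hence $|\det A_{\bar\xi}(\cdot)|$ is bounded below on every compact $[0,t_0-\eta]$, so $\det A_\xi(\cdot)$ stays nonzero on $[0,t_0-\eta]$ for $\xi$ close to $\bar\xi$ and therefore $t_{conj}(\xi)\geq t_0-\eta$. Second, and crucially, set $E:=\ker A_{\bar\xi}(t_0)$ and $k:=\dim E\geq1$. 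Since $[A_{\bar\xi}(t_0);B_{\bar\xi}(t_0)]$ has rank $n$, $B_{\bar\xi}(t_0)$ is injective on $E$; using the symmetry of $A^\top B$, a Schur reduction in fixed coordinates adapted to $E$ shows that, on a neighbourhood of $(\bar\xi,t_0)$, the condition $\det A_\xi(t)=0$ is equivalent to $\det Q_\xi(t)=0$, where $Q_\xi(t)$ is a jointly Lipschitz family of symmetric $k\times k$ matrices with $Q_{\bar\xi}(t_0)=0$, of class $C^1$ in $t$, and whose ``crossing derivative'' $\dot Q_{\bar\xi}(t_0)$ is congruent to the quadratic form
\begin{equation*}
w\;\longmapsto\;\Bigl\langle\, B_{\bar\xi}(t_0)w\,,\ \tfrac{\p^2H}{\p p^2}\!\bigl(\phi_{t_0}^H(\bar x,du(\bar x))\bigr)\,B_{\bar\xi}(t_0)w\,\Bigr\rangle\qquad (w\in E).
\end{equation*}
By (H2) this form is positive definite, hence $\dot Q_{\bar\xi}(t_0)\succeq c\,\mathrm{Id}$ for some $c>0$, and consequently there are $\e\in(0,\e_0]$ and $C_0\geq c/2$ with $\tfrac{c}{2}s\,\mathrm{Id}\preceq Q_{\bar\xi}(t_0+s)\preceq 2C_0 s\,\mathrm{Id}$ for $0\leq s\leq\e$ and the reversed inequalities for $-\e\leq s\leq0$; in particular $Q_{\bar\xi}(t_0+s)$ is negative definite for $s\in[-\e,0)$ and positive definite for $s\in(0,\e]$.

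It remains to run a Weyl‑inequality comparison. Let $L$ be a Lipschitz constant for $\xi\mapsto Q_\xi(t)$ valid for $t\in[t_0-\e,t_0+\e]$ and $\xi\in B(\bar\xi,r_0)$, and set $\rho:=L|\xi-\bar\xi|$, taken $<c\e/2$. Writing $\lambda_{\min},\lambda_{\max}$ for the extreme eigenvalues, Weyl's inequality and the previous bounds give $\lambda_{\min}(Q_\xi(t_0+s))\geq\tfrac{c}{2}s-\rho$ for $s\in[0,\e]$ and $\lambda_{\max}(Q_\xi(t_0+s))\leq\tfrac{c}{2}s+\rho$ for $s\in[-\e,0]$; hence $Q_\xi(t_0+s)$ is positive definite for $s\in(\tfrac{2\rho}{c},\e]$, negative definite for $s\in[-\e,-\tfrac{2\rho}{c})$, negative definite at $s=-\e$ and positive definite at $s=\e$. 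Therefore the continuous function $s\mapsto\lambda_{\min}(Q_\xi(t_0+s))$ is negative on $[-\e,-\tfrac{2\rho}{c})$ and positive on $(\tfrac{2\rho}{c},\e]$, so it vanishes at some $s^\ast\in[-\tfrac{2\rho}{c},\tfrac{2\rho}{c}]$: thus $d\exp(\xi,\cdot)$ is singular at the time $t_0+s^\ast<T(\xi)$, which shows $\xi$ lies in the domain of $t_{conj}$ (so the domain is open) and that there is a conjugate time in $[t_0-\tfrac{2\rho}{c},t_0+\tfrac{2\rho}{c}]$. Combining this with the lower semicontinuity above (used with $\eta=\e$) and with the fact that $Q_\xi(\cdot)$ is nonsingular on $[t_0-\e,t_0-\tfrac{2\rho}{c})$, we get $t_{conj}(\xi)\in[t_0-\tfrac{2\rho}{c},t_0+\tfrac{2\rho}{c}]$, i.e.
\begin{equation*}
\bigl|t_{conj}(\xi)-t_{conj}(\bar x)\bigr|\;\leq\;\tfrac{2L}{c}\,|\xi-\bar x|;
\end{equation*}
since $c,L,\e$ can be chosen uniformly for the base point ranging over a neighbourhood, this is the asserted local Lipschitz bound. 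I expect the genuine difficulty to be the case of a \emph{degenerate} conjugate time ($k\geq2$): then $\det A_{\bar\xi}(\cdot)$ need not change sign at $t_0$, so no naive implicit‑function argument applies; the reduction to the symmetric family $Q_\xi(t)$ and the positivity of its crossing derivative — which is where (H2) is used decisively — is what repairs this, and the scheme is robust enough to need only Lipschitz (not $C^1$) dependence on $\xi$.
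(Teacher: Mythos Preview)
Your argument is correct and takes a genuinely different route from the paper's. The paper first proves a global statement (Lemma~\ref{lem:conj1}): the Lagrangian $J(x,t)$ swept by the linearized flow is transverse to the vertical for \emph{all} $t\in(0,t_{conj}(x)]$ (assertion~(ii)), hence is the graph of a symmetric $n\times n$ matrix $K(x,t)$ whose $t$--derivative is uniformly positive definite (assertion~(iii)). It then picks a single unit vector $h$ in the kernel of $K(\bar x,\bar t)-D^2u(\bar x)$, applies Clarke's implicit function theorem to the scalar $\Psi(x,t)=\langle[K(x,t)-D^2u(x)]h,h\rangle$, and obtains a Lipschitz upper barrier $\tau(x)\geq t_{conj}(x)$. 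You instead work purely locally at the crossing: you reduce to a $k\times k$ symmetric family $Q_\xi(t)$ with $Q_{\bar\xi}(t_0)=0$ and positive definite crossing derivative, and then run a Weyl--inequality comparison of eigenvalues, which simultaneously produces a nearby conjugate time and the two--sided estimate $|t_{conj}(\xi)-t_{conj}(\bar\xi)|\leq (2L/c)|\xi-\bar\xi|$ without any implicit function theorem. Your route sidesteps the transversality step~(ii), which is the most delicate part of the paper's lemma; on the other hand the paper's global monotone family $K(x,t)$ is exactly what is reused in the proof of Theorem~\ref{THMconjLSC}, so their extra work pays off there.

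One point deserves more care. A bare Schur complement of the non--symmetric matrix $A_\xi(t)$ is \emph{not} symmetric, so ``a Schur reduction in fixed coordinates adapted to $E$'' does not by itself produce a symmetric $Q_\xi(t)$. What does work is to use the Lagrangian structure one step earlier: choose a fixed Lagrangian $W$ transverse to both the vertical $V$ and the Lagrangian $\Lambda_{\bar\xi}(t_0)$ spanned by $[A_{\bar\xi}(t_0);B_{\bar\xi}(t_0)]$; for $(\xi,t)$ near $(\bar\xi,t_0)$ the subspace $\Lambda_\xi(t)$ stays transverse to $W$ and is the graph of a symmetric $n\times n$ form $S_\xi(t)$ over $W$, with $\ker S_\xi(t)\cong\Lambda_\xi(t)\cap V$. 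Now take the Schur complement of the \emph{symmetric} matrix $S_\xi(t)$ with respect to $\ker S_{\bar\xi}(t_0)$: this $Q_\xi(t)$ is symmetric, vanishes at $(\bar\xi,t_0)$, is jointly Lipschitz, $C^1$ in $t$, and its $t$--derivative at $(\bar\xi,t_0)$ is the standard crossing form, congruent to $w\mapsto\langle B_{\bar\xi}(t_0)w,\,H_{pp}\,B_{\bar\xi}(t_0)w\rangle$ on $E$ as you state. With this correction the rest of your Weyl argument goes through verbatim.
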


If $M$ is a submanifold of $\R^n$ of class at least $C^2$, a function $u:M \rightarrow \R$ is called locally semiconcave on $M$ if for every $x \in M$
there exist a neighborhood $\mathcal{V}_x$ of $x$ and a diffeomorphism
$\varphi_x: \mathcal{V}_x \rightarrow \varphi_x(\mathcal{V}_x) \subset \R^n$ of class $C^2$ such that
$f\circ \varphi_x^{-1}$ is locally semiconcave on the open set $\varphi_x(\mathcal{V}_x) \subset \R^n$.  

\begin{theorem}
\label{THMconjLSC}
Assume that $H$ and $S=\partial
\Omega$ are of class $C^{3,1}$. Then, the function $x\mapsto
t_{conj}(x)$ is locally semiconcave on its domain.
\end{theorem}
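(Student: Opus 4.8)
The plan is to reduce semiconcavity of $t_{conj}$ to a statement about the vanishing of a determinant associated to the Jacobi (linearized Hamiltonian) flow, and then to show that this vanishing happens in a one-sided-smooth way. Fix $\bar{x}$ in the domain of $t_{conj}$ and let $\bar{t}=t_{conj}(\bar{x})$. Since $H$ and $S$ are $C^{3,1}$, the exponential map $\exp(x,t)=\pi(\phi_t^H(x,du(x)))$ is of class $C^3$ near $(\bar{x},\bar{t})$ (the loss of one derivative from $H\in C^{3,1}$ plus one from $du|_S$ being $C^{2,1}$ by (P2) still leaves us with enough regularity; this bookkeeping needs care). Introduce local coordinates on $S$ near $\bar{x}$, so that $\frac{\partial\exp}{\partial x}(x,t)$ is an $n\times(n-1)$ matrix-valued $C^2$ (at least $C^{1,1}$) function; together with $\frac{\partial\exp}{\partial t}$, which by (H3) never vanishes, it forms an $n\times n$ matrix $J(x,t)$, and $t_{conj}(x)$ is the first $t>0$ where $\det J(x,t)=0$. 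The map $g(x,t):=\det J(x,t)$ is $C^{1,1}$ near $(\bar{x},\bar{t})$, and $g(x,t)>0$ for $0<t<t_{conj}(x)$ near $\bar{x}$.

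First I would establish the key transversality fact: $\frac{\partial g}{\partial t}(\bar{x},\bar{t})<0$, i.e. the first conjugate time is a simple (transverse) zero of $g$ along the $t$-direction. This is the analytic heart of the argument. Because the Jacobi flow along a minimizing-type extremal of a strictly convex Hamiltonian has no conjugate point before $\bar{t}$ and the relevant index form changes sign cleanly, $g$ changes sign from $+$ to $-$ at $\bar{t}$; with $g\in C^{1,1}$ this gives $\frac{\partial g}{\partial t}(\bar x,\bar t)\le 0$, and one must rule out the degenerate case $=0$. I expect this non-degeneracy to be the main obstacle. It should follow from the second variation / Morse-type analysis already implicit in (P4)–(P6): along the characteristic through $\bar{x}$ the extremal is action-minimizing up to the cut time, hence in particular has no conjugate point strictly before $\bar t$, which forces the crossing to be first-order. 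Concretely, one computes $\frac{\partial g}{\partial t}$ in terms of the Jacobi fields: at $\bar t$ the kernel of $J(\bar x,\bar t)$ is one-dimensional (generically), spanned by a Jacobi field $\xi$ vanishing in the appropriate sense at the endpoints, and $\frac{\partial g}{\partial t}(\bar x,\bar t)$ is, up to a positive factor, the index form evaluated on $\xi$, which is strictly negative by minimality just after $\bar t$. Here the assumption that $H$ and $S$ are $C^{3,1}$ (one degree more than in Theorem \ref{THMconjLIP}) is exactly what allows $g$ to be $C^{1,1}$, so that ``$\frac{\partial g}{\partial t}$ is Lipschitz'' makes sense and the implicit-function machinery below produces a semiconcave — not merely Lipschitz — function.

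Given $\frac{\partial g}{\partial t}(\bar x,\bar t)<0$ and $g\in C^{1,1}$, I would apply a $C^{1,1}$ version of the implicit function theorem: there is a neighborhood $B(\bar x,\delta)$ and a $C^{1,1}$ function $\tau:B(\bar x,\delta)\to(0,\infty)$ with $g(x,\tau(x))=0$, $\frac{\partial g}{\partial t}(x,\tau(x))<0$, and $g(x,t)>0$ for $0<t<\tau(x)$; hence $t_{conj}(x)=\tau(x)$ on this ball, which re-proves openness of the domain and the Lipschitz regularity of Theorem \ref{THMconjLIP}, and moreover shows $t_{conj}$ is $C^{1,1}$ locally — in particular locally semiconcave — near any $\bar x$ where the kernel of $d\exp(\bar x,\bar t)$ is one-dimensional. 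The remaining work is the general case, where $d\exp(\bar x,\bar t)$ may have higher-dimensional kernel and $g$ may fail the simple-zero condition in some tangential directions. There I would not use the implicit function theorem directly; instead I would verify the pointwise semiconcavity criterion recalled in the excerpt: for each $x$ near $\bar x$ exhibit a vector $p_x$ with $t_{conj}(y)\le t_{conj}(x)+\langle p_x,y-x\rangle+\sigma|y-x|^2$. The natural candidate for $t_{conj}(y)$ is obtained from a ``comparison Jacobi field'': fix a point $z=z(x,t_{conj}(x))$ in $\ker d\exp(x,t_{conj}(x))$, build a nearby family of curves $s\mapsto\exp(y,s)$ for $y$ near $x$ along which the determinant-type function picks up a conjugate point at a time that depends on $y$ with a quantitative upper bound that is quadratic; because $g$ is $C^{1,1}$ and $\frac{\partial g}{\partial t}\le 0$ at the relevant zero, a second-order Taylor expansion of $g$ at $(x,t_{conj}(x))$ in the direction $(y-x,\tau(y)-\tau(x))$ yields precisely the required inequality with $\sigma$ controlled by $\|g\|_{C^{1,1}}$ and a lower bound on $|\frac{\partial g}{\partial t}|$, the latter being uniform after possibly shrinking $\delta$. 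Assembling these local estimates over a cover of the domain gives local semiconcavity of $t_{conj}$ everywhere on its domain, completing the proof; the only genuinely delicate point throughout remains the strict sign of $\frac{\partial g}{\partial t}$ at first conjugate times, which is what I would write out most carefully.
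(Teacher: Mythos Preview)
Your overall strategy --- find a scalar $C^{1,1}$ function of $(x,t)$ vanishing at first conjugate times with strictly nonzero $t$-derivative, then apply the implicit function theorem to produce $C^{1,1}$ upper barriers for $t_{conj}$ --- is exactly the paper's strategy. The genuine gap is in your choice of scalar function. Using $g(x,t)=\det J(x,t)$ works only when $\ker d\exp(\bar x,\bar t)$ is one-dimensional; as soon as the kernel has dimension $m\ge 2$, the adjugate of $J(\bar x,\bar t)$ vanishes, so $\partial_t g(\bar x,\bar t)=\mathrm{tr}\bigl(\mathrm{adj}(J)\,\partial_t J\bigr)=0$ and no implicit-function argument is available. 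Your proposed workaround for this case still ends by invoking ``a lower bound on $|\partial_t g|$'', which is precisely what fails, so the argument is circular as written. The index-form heuristic you sketch does not rescue this: it explains why a \emph{single} Jacobi field crosses zero transversally, not why the full determinant does.

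The paper sidesteps this by replacing the determinant with a quadratic form in a \emph{single fixed} kernel direction. One shows (this is the key Lemma~\ref{lem:conj1}) that for $t\in(0,t_{conj}(x)]$ the Lagrangian subspace $J(x,t):=R(x,t)^{-1}(\{0\}\times\R^n)$ is transverse to the vertical and hence is the graph of a symmetric matrix $K(x,t)$; the first conjugate time is the first $t$ at which $K(x,t)-D^2u(x)$ acquires a kernel. Fixing a unit vector $h$ with $[K(\bar x,\bar t)-D^2u(\bar x)]h=0$ and setting
\[
\Psi(x,t):=\bigl\langle [K(x,t)-D^2u(x)]\,h,\,h\bigr\rangle,
\]
one has $\Psi(\bar x,\bar t)=0$ and, crucially, $\partial_t\Psi(x,t)=\langle\dot K(x,t)h,h\rangle\ge\delta(x,t)>0$ uniformly near $(\bar x,\bar t)$, \emph{regardless of the dimension of the kernel}. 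The strict positivity of $\dot K$ is obtained by a symplectic computation: since the Hamiltonian flow preserves $\sigma$ and $\frac{\partial^2 H}{\partial p^2}>0$, one finds $\langle\dot K(x,t)h,h\rangle=\langle Q(x,t)w,w\rangle>0$ for an appropriate nonzero $w$. With $H,S\in C^{3,1}$ the function $\Psi$ is $C^{1,1}$, so the implicit function theorem yields a $C^{1,1}$ function $\tau$ with $\tau(\bar x)=t_{conj}(\bar x)$, $t_{conj}\le\tau$ nearby, and $\|\tau\|_{C^{1,1}}$ uniformly controlled; this is exactly the semiconcavity criterion. The idea you were missing is that projecting onto one fixed direction $h$ trades the determinant (which degenerates) for a single eigenvalue-type quantity whose $t$-derivative is uniformly positive by the monotonicity of $t\mapsto K(x,t)$.
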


The proofs of Theorems \ref{THMconjLIP} and \ref{THMconjLSC} are postponed to Section \ref{SecProofConj}. Applications of these results in Riemannian geometry are given in Section \ref{SecApp}. The strategy that we will develop to prove the above theorems will allows us to show that any tangent nonfocal domain of a $C^4$-deformation of the round sphere $(\S^n,g^{can})$ is strictly uniformly convex, see Section \ref{SecApp}.

\subsection{}

The \textit{cut-locus} of $u$ is defined as the closure of its singular set,
that is
$$
\mbox{Cut} (u) = \overline{\Sigma(u)}.
$$

\begin{definition}
For every $x \in S$, we denote by $t_{cut}(x) >0$, the first time
$t\in (0,T(x))$ such that $\exp(x,t) \in \mbox{Cut}(u)$. The function $t_{cut}:S \rightarrow (0,+\infty)$ is called the distance function to the cut locus.
\end{definition}

Note that the following result holds. 

\begin{lemma}
\label{LEMconjcut}
For every $x\in S$, $t_{cut}(x)$ is finite and $t_{cut}(x) \leq t_{conj}(x)$.
\end{lemma}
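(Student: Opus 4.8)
The plan is to reduce both assertions to the following claim: if $0<\tau<T(x)$ and $u$ is of class $C^{k,1}$ on some open neighborhood of the (compact) curve $\Gamma_\tau:=\{\exp(x,t):0\le t\le\tau\}$, then $d\exp(x,\tau)$ is nonsingular. Granting this, the inequality $t_{cut}(x)\le t_{conj}(x)$ follows immediately: for any $\tau\in(0,t_{cut}(x))$ we have $\exp(x,t)\notin\mbox{Cut}(u)$ for all $t\in[0,\tau]$, hence — by (P2) near $S$ and by (P3) on $\Omega\setminus\overline{\Sigma(u)}=\Omega\setminus\mbox{Cut}(u)$ — the function $u$ is $C^{k,1}$ on an open neighborhood of $\Gamma_\tau$; the claim then gives that $d\exp(x,\tau)$ is nonsingular, i.e.\ $\tau\neq t_{conj}(x)$. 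As $\tau<t_{cut}(x)$ was arbitrary, $t_{conj}(x)\ge t_{cut}(x)$.

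To prove the claim, work on an open set $W$ with $\Gamma_\tau\subset W$ on which $u\in C^{k,1}$, and consider the vector field $X(y):=\frac{\partial H}{\partial p}(y,du(y))$ on $W$. Since $H\in C^{k,1}$ and $k\ge 2$, $X$ is of class $C^{k-1}$, in particular $C^1$, so it generates a local flow $\psi_t$ of $C^{k-1}$ diffeomorphisms. As $u\in C^1(W)$ it is a classical solution of $H(y,du(y))=0$ on $W$; differentiating this identity gives $D^2u(y)\,\frac{\partial H}{\partial p}(y,du(y))=-\frac{\partial H}{\partial x}(y,du(y))$, so along any integral curve $y(\cdot)$ of $X$ the lift $\bigl(y(\cdot),du(y(\cdot))\bigr)$ solves the Hamiltonian system \eqref{sysH}. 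By uniqueness for \eqref{sysH}, if $\sigma$ is a $C^{k,1}$ local parametrization of $S$ with $\sigma(0)=x$, then $\exp(\sigma(a),t)=\psi_t(\sigma(a))$ for $(a,t)$ near $(0,\tau)$; differentiating this and using $X(\psi_\tau(x))=D\psi_\tau(x)X(x)$, the image of $d\exp(x,\tau)$ equals $D\psi_\tau(x)\bigl(T_xS+\R\,X(x)\bigr)$. Finally, $X(x)\notin T_xS$: since $u\equiv 0$ on $S$ the covector $du(x)$ annihilates $T_xS$, and it is nonzero (else $H(x,0)=0$, against (H3)), so it is a nonzero normal to $S$ at $x$; moreover $H(x,0)<0$ with $H(x,\cdot)$ convex implies that $\{p:H(x,p)\le 0\}$ is a convex body with $0$ in its interior, which forces $\langle \frac{\partial H}{\partial p}(x,p),p\rangle>0$ whenever $H(x,p)=0$, hence $\langle X(x),du(x)\rangle>0$. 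Thus $T_xS+\R\,X(x)=\R^n$, $d\exp(x,\tau)$ is onto and therefore nonsingular.

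For the finiteness of $t_{cut}(x)$, suppose instead that $\exp(x,t)\notin\mbox{Cut}(u)$ for every $t\in(0,T(x))$. Since $H$ is conserved along the trajectory $t\mapsto\phi^H_t(x,du(x))$ and equals $H(x,du(x))=0$, the superlinearity (H1) keeps the momentum $p(t)$ bounded; as $\exp(x,t)\in\overline{\Omega}$ and $\overline{\Omega}$ is compact, the whole trajectory stays in a compact subset of $\R^n\times\R^n$ and is hence defined for all $t\ge 0$, i.e.\ $T(x)=+\infty$. On the other hand, by the argument above (valid for every $\tau<T(x)$) the curve $\exp(x,\cdot)$ is the integral curve of $X$ through $x$, with $du(\exp(x,t))=p(t)$ and $H(\exp(x,t),p(t))=0$, so by Legendre duality $\frac{d}{ds}u(\exp(x,s))=\langle p(s),\tfrac{\partial H}{\partial p}(\exp(x,s),p(s))\rangle=L\bigl(\exp(x,s),\tfrac{\partial H}{\partial p}(\exp(x,s),p(s))\bigr)$, and integrating from $u(\exp(x,0))=u(x)=0$,
\[
u(\exp(x,t))=\int_0^t L\Bigl(\exp(x,s),\tfrac{\partial H}{\partial p}(\exp(x,s),p(s))\Bigr)\,ds\ \ge\ \int_0^t\bigl(-H(\exp(x,s),0)\bigr)\,ds\ \ge\ c\,t,
\]
where $c:=\min_{\overline{\Omega}}(-H(\cdot,0))>0$. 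Letting $t\to+\infty$ contradicts the boundedness of the continuous function $u$ on the compact set $\overline{\Omega}$. Hence $\exp(x,t)\in\mbox{Cut}(u)$ for some $t\in(0,T(x))$, i.e.\ $t_{cut}(x)<\infty$.

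The delicate point is the claim, i.e.\ turning the qualitative fact that $u$ is smooth along the characteristic into the quantitative nondegeneracy of $d\exp$; the key is that on the region where $u$ is $C^1$ the exponential map is, up to the parametrization of $S$, the flow of the genuine $C^{k-1}$ vector field $\frac{\partial H}{\partial p}(\cdot,du(\cdot))$, together with the transversality of this flow to $S$ coming from (H1)–(H3). Everything else is bookkeeping.
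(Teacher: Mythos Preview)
Your argument is correct and close in spirit to the paper's, though organized differently. For the inequality $t_{cut}(x)\le t_{conj}(x)$, the paper argues by contradiction: assuming $t_{conj}(x)<t_{cut}(x)$, it uses that $u$ is $C^{1,1}$ near $\bar y=\exp(x,t_{conj}(x))$ to build a Lipschitz local inverse $y\mapsto(F(y),T(y))$ of $\exp$ via the \emph{backward} Hamiltonian flow $\phi^H_{-T(y)}(y,du(y))$, contradicting the singularity of $d\exp$ at that point. Your route is the forward version of the same idea: on the region where $u$ is smooth, $\exp$ coincides with the flow of the $C^{k-1}$ vector field $X=\partial_pH(\cdot,du(\cdot))$ composed with a chart of $S$, hence is a local diffeomorphism once the transversality $X(x)\notin T_xS$ is checked. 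Both arguments are the method of characteristics in disguise; your packaging as a single reusable claim is arguably cleaner, and your transversality computation via the convexity inequality $\langle\partial_pH(x,p),p\rangle\ge -H(x,0)>0$ is a nice touch that the paper leaves implicit.

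There is, however, one omission in your finiteness argument. You assert ``$\exp(x,t)\in\overline{\Omega}$'' without justification, and this is needed both to conclude $T(x)=+\infty$ and to invoke your claim along the whole curve (since $u$, and hence $X$, is only defined on $\overline{\Omega}$). The paper handles this as a separate case: if the trajectory were to leave $\overline{\Omega}$, it would cross $S$ at some first positive time $\bar t$; on $[0,\bar t]$ the curve stays in $\overline{\Omega}\setminus\mbox{Cut}(u)$, so your own integral formula gives $u(\exp(x,\bar t))=\int_0^{\bar t}L\,ds\ge c\,\bar t>0$, contradicting $u|_S=0$. Once you insert this short step---which uses nothing beyond what you have already set up---your proof is complete.
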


\noindent \textit{Proof of Lemma \ref{LEMconjcut}.} Let $x\in S$ be fixed; let us prove that $t_{cut}(x)$ is finite. Suppose that $\exp(x,t) \notin \mbox{Cut}(u)$ for all $t\in (0,T(x))$. Two cases may appear. If there is $t\in (0,T(x))$ such that $\exp (x,t) \notin \overline{\Omega}$, then  this means that there is $\bar{t} \in (0,T(x))$ such that $\exp(x,\bar{t})\in S$. So, thanks to (P3), $u$ is $C^{k,1}$ along the curve $\gamma(\cdot)$ defined as $\gamma(t):=\exp (x,t)$ for $t\in [0,\bar{t}]$. Thanks to (P4), we have
$$
0= u(\gamma(\bar{t}) - u(\gamma(0)) = \int_0^{\bar{t}} L(\gamma(s),\dot{\gamma}(s)) ds.
$$
But by definition and (H3), the Lagrangian $L$ satisfies for every $(x,v) \in \R^n \times \R^n$,
\begin{eqnarray*}
L(x,v) & := & \max_{p\in \R^{n}} \left\{ \langle p,v\rangle -H(x,p) \right\} \\
& \geq & -H(x,0) >0,
\end{eqnarray*}
which yields 
$$
 \int_0^{\bar{t}} L(\gamma(s),\dot{\gamma}(s)) ds >0.
$$
So, we obtain a contradiction. If $\exp(x,t)$ belongs to $\overline{\Omega}$ for all $t\in (0,T(x))$, this means, by compactness of $\overline{\Omega}$, that $T(x)=+\infty$. So, thanks to (P3) and (P4), setting $\gamma(t) := \exp(x,t)$ for any $t\geq 0$, we obtain 
$$
u(\gamma(t)) = u(\gamma(t)) -u(\gamma(0)) =\int_0^t L(\gamma(s),\dot{\gamma}(s)) ds \qquad \forall t \geq 0.
$$
But, by compactness of $\overline{\Omega}$, on the one hand there is $\rho>0$ such that $ L(\gamma(s),\dot{\gamma}(s)) \geq \rho$ for any $t\geq 0$ and on the other hand $u$ is bounded from above. We obtain a contradiction. Consequently, we deduce that there is necessarily $t\in (0,T(x))$ such that $\exp (x,t)\in \mbox{Cut}(u)$, which proves that $t_{cut}(x)$ is well-defined. 

Let us now show that $t_{cut}(x) \leq t_{conj}(x)$. We argue by contradiction. Suppose that $t_{conj}(x) <t_{cut}(x)$. Thanks to (P3), this means that the function $u$ is at least $C^{1,1}$ in an open neighborhood $\mathcal{V}$ of $\bar{y}:=\exp (x,t_{conj}(x))$ in $\Omega$. Set for every $y\in \mathcal{V}$, 
$$
T(y) := \inf \left\{ t \geq 0 \ \vert \ \phi^{H}_{-t} (y,du(y) ) \in S \right\}.
$$
By construction, one has $T(\bar{y})=t_{conj}(x)$. Moreover since the curve $t \mapsto \exp(x,t)$ is transversal to $S$ at $t=0$, taking $\mathcal{V}$ smaller if necessary, we may assume that $T$ is of class $C^{k-1,1}$ on $\mathcal{V}$. Define $F:\mathcal{V} \rightarrow S$ by 
$$
F(y):= \pi \left( \phi_{-T(y)}^H(y,du(y)\right) \qquad \forall y\in \mathcal{V}.
$$
The function $F$ is Lipschitz on $\mathcal{V}$ and satisfies $\exp (F(y),T(y))=y$ for every $y\in \mathcal{V}$. This show that the function $\exp$ has a Lipschitz inverse in a neighborhood of the point $(x,t_{conj}(x))$. This contradicts the fact that $d \exp (x,t_{conj}(x)) $ is singular. \hfill $\Box$ \\

Actually, the distance function to the cut locus at $x\in S$ can be seen as the time after which the "geodesic" starting at $x$ ceases to be minimizing.

\begin{lemma}
\label{ceases}
For every $x\in S$, the time $t_{cut}(x)$ is the maximum of times $t\geq 0$ satisfying the following property:
\begin{eqnarray}
\label{eqceases}
u(\exp(x,t)) = \int_0^t L\left( \exp(x,s), \frac{\partial \exp}{\partial t}(x,s)\right) ds.
\end{eqnarray}
\end{lemma}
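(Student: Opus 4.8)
\medskip
\noindent\textit{Proof strategy.} Write $\gamma(t):=\exp(x,t)$ and let $p(t)$ be given by $\phi^H_t(x,du(x))=(\gamma(t),p(t))$, so that $\dot\gamma(t)=\frac{\partial H}{\partial p}(\gamma(t),p(t))$ and $H(\gamma(t),p(t))\equiv H(x,du(x))=0$; in particular $p(t)$ achieves the maximum defining $L(\gamma(t),\cdot)$, so that $L(\gamma(t),\dot\gamma(t))=\langle p(t),\dot\gamma(t)\rangle$. With $A(t):=\int_0^tL(\gamma(s),\dot\gamma(s))\,ds$ the equation \eqref{eqceases} reads $u(\gamma(t))=A(t)$, and I would prove that the set of (admissible) times at which it holds is precisely the interval $[0,t_{cut}(x)]$; its maximum is then $t_{cut}(x)$.

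\emph{The equation holds on $[0,t_{cut}(x)]$, and its solution set is downward closed.} For $t<t_{cut}(x)$ the arc $\gamma([0,t])$ avoids $\mathrm{Cut}(u)=\overline{\Sigma(u)}$, so by (P2)--(P3) the function $u$ is $C^{k,1}$ near it, and since $\gamma$ is the projection of the Hamiltonian trajectory issued from $(x,du(x))$ with $x\in S$, the method of characteristics gives $du(\gamma(s))=p(s)$ along the arc; hence $\frac{d}{ds}u(\gamma(s))=\langle p(s),\dot\gamma(s)\rangle=L(\gamma(s),\dot\gamma(s))$, and integrating with $u(\gamma(0))=u(x)=0$ yields $u(\gamma(t))=A(t)$ on $[0,t_{cut}(x))$, whence on $[0,t_{cut}(x)]$ by continuity of both sides. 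Moreover, by (P5) one always has $u(\gamma(t))\le A(t)$, and applying (P5) on $[0,t]$ and on $[t,t_1]$ and adding shows that if \eqref{eqceases} holds at $t_1$ then it holds at every $t\in[0,t_1]$. So the solution set is an interval $[0,t^\ast]$ with $t^\ast\ge t_{cut}(x)$, and it remains to prove $t^\ast\le t_{cut}(x)$.

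\emph{No $t>t_{cut}(x)$ is a solution.} Assume \eqref{eqceases} holds at some $t_1>t_{cut}(x)$; then $\gamma|_{[0,t_1]}$ realizes $l(x,\gamma(t_1))=u(\gamma(t_1))$, so it is a minimizing curve from $S$, and so is each sub-arc $\gamma|_{[0,s]}$. For $s\in(0,t_1)$ this minimizing curve to $\gamma(s)$ is in fact unique: a competitor, concatenated with $\gamma|_{[s,t_1]}$, would again be minimizing to $\gamma(t_1)$, hence a $C^1$ extremal, which forces its velocity at $\gamma(s)$ to equal $\dot\gamma(s)$, and then uniqueness of Hamiltonian trajectories --- together with the fact that $\gamma((0,s))$ cannot meet $S$ (where $u=0$, while $u(\gamma(\tau))=A(\tau)>0$ for $\tau\in(0,s]$ by (H3)) --- identifies it with $\gamma|_{[0,s]}$. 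By (P4) this uniqueness yields $\partial_Lu(\gamma(s))=\{p(s)\}$, so $u$ is differentiable at every $\gamma(s)$, $s\in(0,t_1)$. I would then distinguish two cases. If $d\exp(x,s)$ is non-singular for some $s\in[t_{cut}(x),t_1)$, then $\exp$ is a diffeomorphism from a neighborhood of $(x,s)$ in $S\times\R$ onto a neighborhood $W$ of $\gamma(s)$; the $C^{k-1}$ map on $W$ carrying $y$ to the action of the characteristic $\exp(\exp^{-1}(y),\cdot)$ dominates $u$, agrees with $u$ at $\gamma(s)$, and --- by a compactness argument showing that any minimizing curve ending near $\gamma(s)$ is $C^1$-close to $\gamma|_{[0,s]}$, hence equals that very characteristic --- coincides with $u$ near $\gamma(s)$; thus $\gamma(s)\notin\overline{\Sigma(u)}$, $u$ is $C^{k,1}$ near $\gamma(s)$ by (P3), and by (P7) $u$ is $C^{1,1}$ near $\gamma(t_{cut}(x))$, so $\gamma(t_{cut}(x))\notin\mathrm{Cut}(u)$, contradicting the definition of $t_{cut}(x)$. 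If instead $d\exp(x,s)$ is singular for all $s\in[t_{cut}(x),t_1)$, then $t_{conj}(x)\le t_{cut}(x)$, so $t_{conj}(x)=t_{cut}(x)<t_1$ by Lemma \ref{LEMconjcut}, and $\gamma|_{[0,t_1]}$ would be a minimizing extremal strictly past its first conjugate time --- impossible by the classical Jacobi criterion (equivalently, by the second-variation estimates of Section \ref{SecProofConj}). Either way we obtain a contradiction, so $t^\ast=t_{cut}(x)$.

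\medskip
The crux is the last paragraph, and within it the non-singular case: rigorously turning ``unique minimizer plus non-singular $d\exp$'' into genuine local $C^{k,1}$ regularity of $u$ (the local-diffeomorphism/compactness argument identifying $u$ with the local action), after which (P7) propagates regularity back to $\gamma(t_{cut}(x))$. The residual loose end is the degenerate situation in which conjugate times fill $[t_{cut}(x),t_1)$, which is where one must invoke the non-minimality of an extremal beyond a conjugate point. Everything in the first two paragraphs is routine once (P1)--(P7) are granted.
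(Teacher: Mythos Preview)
Your proof is correct and uses the same three ingredients as the paper: the concatenation/corner argument, a dichotomy involving singularity of $d\exp$, and the Jacobi-type contradiction when conjugate times fill an interval. The organization differs slightly. The paper first invokes (P7) in contrapositive form to conclude that $\gamma(s)\in\mathrm{Cut}(u)$ for every $s\in[t_{\mathrm{cut}}(x),\bar t]$, and then splits according to whether $\gamma(\bar s)\in\Sigma(u)$ (ruled out by the corner argument, exactly as you use it) or $\gamma(\bar s)\in\mathrm{Cut}(u)\setminus\Sigma(u)$; in the latter case it takes a sequence $y_k\in\Sigma(u)$ converging to $\gamma(\bar s)$, notes that each $y_k$ carries two distinct elements of $\partial_L u(y_k)$ and hence two distinct preimages under $\exp$, both converging to $(x,\bar s)$, and concludes directly that $d\exp(x,\bar s)$ is singular. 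This sequence argument is precisely the contrapositive of your non-singular case, but it sidesteps the local-diffeomorphism-plus-compactness-of-minimizers construction that you correctly flag as the crux of your version. The paper's route is a bit slicker at that step, while yours has the merit of making the local $C^{k,1}$ regularity of $u$ explicit; both then finish with the same symplectic/Jacobi argument (the paper writes it out via the linearized system of Section~\ref{secSYMP}).
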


\noindent \textit{Proof of Lemma \ref{LEMconjcut}.} Set $T:= t_{cut}(x)$. First, by (P4), we know that 
$$
u(\exp(x,T)) = \int_0^T L\left( \exp(x,s), \frac{\partial \exp}{\partial t}(x,s)\right) ds.
$$
Argue by contradiction and assume that there is $\bar{t}>T$ such that (\ref{eqceases}) is satisfied. By (P7), for every $s\in [T,\bar{t}]$, the point $\exp(x,s)$ necessarily belongs to $\mbox{Cut}(u)$ (the fact that $\exp(x,s)$ belongs to $\Omega$ is a consequence of the proof of Lemma \ref{LEMconjcut}). Fix $\bar{s} \in (T,\bar{t})$ and set $\bar{y}:= \exp(x,\bar{s})$. Two cases may appear: either $\bar{y}$ belongs to $\Sigma(u)$ or $\bar{y}$ belongs to $\mbox{Cut}(u) \setminus \Sigma(u)= \overline{\Sigma(u)} \setminus \Sigma(u)$. By (P4), if $\bar{y}$ belongs to $\Sigma(u)$, then there is a curve $\bar{\gamma}_p:[-T_p,0] \rightarrow \overline{\Omega}$ with 
\begin{eqnarray}
\label{corner}
\dot{\bar{\gamma}}_{p}(0)\neq \frac{ \partial \exp}{\partial t}(x,\bar{s})
\end{eqnarray}
 such that
$$
u(\bar{y}) = \int_{-T_p}^0 L \left(\bar{\gamma}_{p}(s),\dot{\bar{\gamma}}_{p}(s)\right) ds.
$$
Thanks to (P4)-(P6), this means that the curve $\tilde{\gamma}:[-T_p,\bar{t}-\bar{s}] \rightarrow \overline{\Omega}$ defined as 
$$
\tilde{\gamma}(s) := \left\{ \begin{array}{rcl}
\bar{\gamma}_{p} (s) & \mbox{ if } & s \in [-T_{p},0] \\
\exp(x,\bar{s} + s) & \mbox{ if } & s \in [0,\bar{t}-\bar{s}],
\end{array}
\right.
$$
minimizes the quantity 
$$
\int_{-T_p}^{\bar{t}-\bar{s}} L( \gamma(s),\dot{\gamma}(s)) ds,
$$
among all curves $\gamma :[-T_p,\bar{t}-\bar{s}] \rightarrow \overline{\Omega}$ such that $\gamma(-T_p)=\bar{\gamma}(-T_p)$ and $\tilde{\gamma}(\bar{t}-\bar{s}) = \exp(x,\bar{t})$. But, thanks to (\ref{corner}), the curve $\tilde{\gamma}$ has a corner at $s=0$. This contradicts the regularity of minimizing curves given by Euler-Lagrange equations. Therefore, we deduce that $\bar{y}$ necessarily belongs to $\mbox{Cut}(u) \setminus \Sigma(u)$. This means that $u$ is differentiable at $\bar{y}$ and that there is a sequence of points $\{y_k\}$ of $\Sigma(u)$ converging to $\bar{y}$. Thus by (P4)-(P6), for each $k$, there are $p_k^1\neq p_k^2$ in $\partial_L u(y_k)$ and $T_k^1,T_k^2>0$ such that  
$$
y_k = \exp \left( \phi_{-T_k^1}^H (y_k,p_k^1) \right) = \exp \left( \phi_{-T_k^2}^H (y_k,p_k^2) \right).
$$
Since the sequences $\{p_k^1\}, \{p_k^2\}$ and $\{T_k^1\}, \{T_k^2\}$ necessarily converge to $du(\bar{y})$ and $\bar{s}$, we deduce that $\exp$ is singular at $(x,\bar{s})$. To summarize, we proved that if there is $\bar{t}>T$ such that (\ref{eqceases}) is satisfied, then for every $s\in [T,\bar{t}]$, the function $\exp$ is singular at $(x,s)$. Let us show that it leads to a contradiction\footnote{The fact that a "geodesic" ceases to be minimizing after the first conjugate is well-know. However, since our Lagrangian (or equivalently our Hamiltonian) is merely $C^{2,1}$ (and indeed for sake of completeness), we prefer to provide the proof of this fact.}. Using the notations which will be defined later in Section \ref{secSYMP}, there is $(h,v)\neq 0\in U(x)$ such that the solution $(h(\cdot),v(\cdot))$ of the linearized Hamiltonian system (\ref{HamSysLin}) starting at $(h,v)$ satisfies $h(T)=0$. Moreover, since any $s\in [T,\bar{t}]$ is a conjugate time, there is indeed a sequence $\{s_k\}$ converging to $T$ associated to a sequence of vectors $\{(h_k,v_k)\}$ converging to $(h,v)$ such that each solution $(h_k(\cdot),v_k(\cdot))$ of (\ref{HamSysLin}) starting at $(h_k,v_k)$ satisfies $h_k(s_k)=0$. Since the Hamiltonian flow preserves the canonical symplectic form $\sigma$, one has for any $k$,
$$
\langle h_k(T),v(T)\rangle =0.
$$
But since $h_k(s_k)=0$, the differential equation (\ref{HamSysLin}) yields  
$$
h_k(T)= - (s_k-T) Q(x,s_k) v_k(s_k) +o (s_k-T).
$$
Since $\{v_k(s_k)\}$ converges to $v(T)$, we deduce that 
$$
\langle Q(x,T) v(T),v(T)\rangle =0,
$$
which contradicts the fact that $Q(x,T)= \frac{\partial^2 H}{\partial p^2}\left(\bar{y},\frac{\partial L}{\partial v} (\bar{y}, \frac{ \partial \exp}{\partial t}(x,T)\right)$ is positive definite. \hfill $\Box$\\

Define the set $\Gamma(u) \subset \mbox{Cut}(u)$ as 
$$
\Gamma (u) := \left\{ \exp(x,t) \ \vert \ x\in S, t>0 \mbox { s.t. } t=t_{conj}(x)=t_{cut}(x) \right\},
$$

The two above lemmas yields the following result.

\begin{lemma}
One has 
$$
\mbox{Cut}(u) = \Sigma (u) \cup \Gamma (u).
$$
\end{lemma}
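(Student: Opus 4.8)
The plan is to prove the two inclusions separately, the hard one being $\mbox{Cut}(u)\subseteq\Sigma(u)\cup\Gamma(u)$. The inclusion $\Sigma(u)\cup\Gamma(u)\subseteq\mbox{Cut}(u)$ is immediate, since $\Sigma(u)\subseteq\overline{\Sigma(u)}=\mbox{Cut}(u)$ and $\Gamma(u)\subseteq\mbox{Cut}(u)$ by construction. For the other inclusion, as $\Sigma(u)\subseteq\mbox{Cut}(u)$ as well, it suffices to show that every $y\in\mbox{Cut}(u)\setminus\Sigma(u)=\overline{\Sigma(u)}\setminus\Sigma(u)$ lies in $\Gamma(u)$. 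First I would record that $\mbox{Cut}(u)\subseteq\Omega$: by (P2) the function $u$ is $C^{k,1}$ in a one-sided neighbourhood of $S$, so $\Sigma(u)$---and hence its closure---stays at positive distance from $S$. Then, fixing $y\in\mbox{Cut}(u)\setminus\Sigma(u)$, I would note that $u$ is differentiable at $y$ with $\partial_L u(y)=\{du(y)\}$, so by (P4) together with uniqueness for the Hamiltonian system there is a \emph{unique} characteristic issued from $S$ reaching $y$; write $y=\exp(x,t)$ with $x\in S$, and $t>0$ since $y\in\Omega$. (Uniqueness of the length $t$ uses that characteristics cross $S$ transversally, as observed in the proof of Lemma~\ref{LEMconjcut}, so the backward trajectory from $(y,du(y))$, which must stay in $\overline{\Omega}$, meets $S$ only once, at time $t$.)

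The next step is to identify $t$ with $t_{cut}(x)$. Since the curve from $x$ to $y$ is a characteristic and $u\equiv 0$ on $S$, property (P4) gives $u(y)=\int_0^t L\bigl(\exp(x,s),\tfrac{\partial\exp}{\partial t}(x,s)\bigr)\,ds$, so $t\le t_{cut}(x)$ by Lemma~\ref{ceases}; conversely $y=\exp(x,t)\in\mbox{Cut}(u)$ forces $t\ge t_{cut}(x)$ by the very definition of $t_{cut}$. Hence $t=t_{cut}(x)$, and in particular $t\le t_{conj}(x)$ by Lemma~\ref{LEMconjcut}.

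It then remains to prove $t=t_{conj}(x)$, i.e.\ that $d\exp(x,t)$ is singular. I would argue by contradiction: if $d\exp(x,t)$ were nonsingular, $\exp$ would be a $C^{k-1}$ diffeomorphism, in particular injective, on a neighbourhood $W$ of $(x,t)$ in $S\times(0,+\infty)$. Choosing $y_k\in\Sigma(u)$ with $y_k\to y$, for each $k$ the set $\partial_L u(y_k)$ is not a singleton, so by (P4) I can pick $p_k^1\ne p_k^2$ in $\partial_L u(y_k)$ and corresponding backward characteristics reaching $S$ at base points $x_k^i\in S$ in times $T_k^i>0$, with $\exp(x_k^i,T_k^i)=y_k$ and momentum $p_k^i$ at $y_k$; note $(x_k^1,T_k^1)\ne(x_k^2,T_k^2)$, for otherwise the two characteristics coincide and $p_k^1=p_k^2$. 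The sequences $\{p_k^i\}$ are bounded ($u$ is locally Lipschitz) and the sequences $\{T_k^i\}$ are bounded, because the characteristics remain in a fixed compact set on which $L\ge\rho>0$, whence $\rho\,T_k^i\le u(y_k)$ by (P4). Passing to a subsequence, $p_k^i\to du(y)$ (the graph of $\partial_L u$ is closed) and $T_k^i\to\tau^i\ge 0$, and by continuous dependence of the Hamiltonian flow---no blow-up occurs since the trajectories stay bounded---the base points converge in $S$ and the limit is again a characteristic reaching $y$ with momentum $du(y)$ there. Since that backward trajectory meets $S$ only at time $t$, both limits equal the pair $(x,t)$. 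Then for $k$ large, $(x_k^1,T_k^1),(x_k^2,T_k^2)\in W$ are mapped by $\exp$ to the same point $y_k$, so $(x_k^1,T_k^1)=(x_k^2,T_k^2)$ and $p_k^1=p_k^2$---a contradiction. Hence $d\exp(x,t)$ is singular, $t\ge t_{conj}(x)$, and combining with the previous step, $t=t_{cut}(x)=t_{conj}(x)$; thus $y=\exp(x,t)\in\Gamma(u)$, which finishes the proof.

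The main obstacle is the compactness argument of the last paragraph: one must bound the backward lengths $T_k^i$ uniformly, make sure the limiting objects are genuine characteristics (no blow-up and limit time strictly below $T(x)$), and identify both limiting base points with $x$---this last point resting on the transversality of characteristics to $S$ and on the uniqueness of the minimizing characteristic at the differentiability point $y$. All of these ingredients already appear, somewhat implicitly, inside the proof of Lemma~\ref{ceases}, so in practice this lemma is mostly a repackaging of that argument; the genuinely new observation is the bookkeeping that upgrades the limiting conjugate time to $t_{conj}(x)$ via the chain $t\le t_{cut}(x)\le t_{conj}(x)\le t$.
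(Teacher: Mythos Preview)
Your proof is correct and follows essentially the same approach as the paper. The paper does not write out a separate proof of this lemma, stating instead that it is a consequence of Lemmas~\ref{LEMconjcut} and~\ref{ceases}; the substantive argument---taking $y\in\overline{\Sigma(u)}\setminus\Sigma(u)$, approximating by $y_k\in\Sigma(u)$, picking two distinct limiting subdifferentials $p_k^1\neq p_k^2$, and concluding that $d\exp$ must be singular at the limiting pair $(x,t)$---already appears verbatim inside the proof of Lemma~\ref{ceases}, exactly as you observe in your final paragraph. Your write-up simply makes explicit the bookkeeping (identifying $t=t_{cut}(x)$ via Lemma~\ref{ceases} and then $t=t_{conj}(x)$ via the local-injectivity contradiction) that the paper leaves implicit.
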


The following theorem is due to Li and Nirenberg \cite{ln05}; we provide a new proof of it in Section \ref{sec3}.

\begin{theorem}
\label{THMcut} 
Assume that $H$ and $S=\partial \Omega$
are of class $C^{2,1}$. Then the function $x \mapsto t_{cut}(x)$
is locally Lipschitz on its domain.
\end{theorem}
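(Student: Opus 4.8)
\noindent\textit{Proof proposal.} The plan is to derive Theorem~\ref{THMcut} from Theorem~\ref{THMconjLIP} together with a quantitative ``broken extremals are not minimizing'' estimate. Concretely, I would prove that near every point of $S$ the function $t_{cut}$ satisfies an upper Lipschitz bound with a \emph{locally uniform} constant; exchanging the roles of the two points then yields the matching lower bound. First, $t_{cut}$ is bounded on $S$: as in the proof of Lemma~\ref{LEMconjcut}, $\rho\,t_{cut}(x)\le \int_0^{t_{cut}(x)}L\bigl(\exp(x,s),\tfrac{\p\exp}{\p t}(x,s)\bigr)ds = u\bigl(\exp(x,t_{cut}(x))\bigr)\le \max_{\cO}u$ with $\rho:=\min_{\cO}(-H(\cdot,0))>0$ by (H3); hence $t_{cut}\le T_{\max}:=\rho^{-1}\max_{\cO}u$, and for a fixed $\bar x$ all nearby curves $\exp(x,\cdot)$ are defined on a common interval $[0,t_{cut}(\bar x)+\d]$ (using $t_{cut}<t_{conj}<T$ and lower semicontinuity of the blow-up time).

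Fix a base point $\bar x\in S$ and set $q:=\exp(\bar x,t_{cut}(\bar x))$. If $t_{cut}(\bar x)=t_{conj}(\bar x)$, then $\bar x\in\mathrm{dom}(t_{conj})$ and Theorem~\ref{THMconjLIP} gives $t_{cut}(x)\le t_{conj}(x)\le t_{conj}(\bar x)+L\,|x-\bar x|=t_{cut}(\bar x)+L\,|x-\bar x|$ for $x$ near $\bar x$. If instead $t_{cut}(\bar x)<t_{conj}(\bar x)$, then Lemma~\ref{ceases} and the discussion after it force $q\in\Sigma(u)$, so by (P4) there is, besides the extremal $\exp(\bar x,\cdot)$ arriving at $q$ with velocity $v_0=\tfrac{\p\exp}{\p t}(\bar x,t_{cut}(\bar x))$, a second minimizing extremal $\gamma_1:[0,T_1]\to\cO$ from some $x_1\in S$ arriving at $q$ with velocity $v_1\ne v_0$ (Legendre bijection). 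Put $j:=|v_0-v_1|>0$. Suppose $t_{cut}(x)\ge t_{cut}(\bar x)+h$ for $x$ near $\bar x$ and $h\in(0,\d]$; then $\exp(x,\cdot)$ minimizes on $[0,t_{cut}(\bar x)+h]$, so its action equals $u\bigl(\exp(x,t_{cut}(\bar x)+h)\bigr)$. I compare this with the competitor that follows $\gamma_1$ to $q$, then $\exp(\bar x,\cdot)$ on $[t_{cut}(\bar x),t_{cut}(\bar x)+h]$ with the corner at $q$ rounded over the parameter-length $h$, and finally joins $\exp(\bar x,t_{cut}(\bar x)+h)$ to $\exp(x,t_{cut}(\bar x)+h)$ by a short curve. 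Rounding a corner of velocity jump $j$ over parameter-length $h$ decreases the action by at least $c\,h\,j^2$ (a quantitative Euler--Lagrange regularity statement, using the strict convexity (H2) of $L$ in $v$), while the short joining curve costs $\le C|x-\bar x|$ and the discrepancies between the actions along $\exp(x,\cdot)$ and $\exp(\bar x,\cdot)$ are $\le C(1+h)|x-\bar x|$ by local Lipschitzness of $u$ (P1) and of $L$. Cancelling the common terms $u(q)$ and $\int_{t_{cut}(\bar x)}^{t_{cut}(\bar x)+h}L(\exp(\bar x,\cdot))$ yields $c\,h\,j^2\le C'(1+h)|x-\bar x|$, hence $h\le (2C'/cj^2)|x-\bar x|$ for $|x-\bar x|$ small, and therefore $t_{cut}(x)\le t_{cut}(\bar x)+(2C'/cj^2)|x-\bar x|$. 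The key point is that the corner gain is \emph{linear} in $h$, which is exactly what makes this a Lipschitz and not merely a H\"older bound. Taking $\bar x=x_0$ already shows $t_{cut}$ is upper semicontinuous, and with the (easy) lower semicontinuity ($\mathrm{Cut}(u)$ is closed), $t_{cut}$ is continuous.

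To pass to local Lipschitz continuity near a given $x_0$ one must make the constant locally uniform. If $t_{cut}(x_0)<t_{conj}(x_0)$, a compactness argument bounds $j$ from below near $x_0$: if separating base points $\bar x_k\to x_*$ had $j(\bar x_k)\to0$, the two minimizing extremals reaching $q_k=\exp(\bar x_k,t_{cut}(\bar x_k))$ would converge (continuity of $t_{cut}$, bounded lengths, continuity of the flow) to extremals reaching $q_*=\exp(x_*,t_{cut}(x_*))$ with equal velocity there, hence to a single one; two distinct preimages of $q_k$ under $\exp$ would then collapse onto $(x_*,t_{cut}(x_*))$, forcing $d\exp$ to be singular there, i.e.\ $t_{conj}(x_*)=t_{cut}(x_*)$. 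Since $\{t_{cut}=t_{conj}<\infty\}$ is closed and misses $x_0$, on a small ball around $x_0$ all base points are separating with $j\ge\bar j>0$, so the estimate holds with the uniform constant $2C'/c\bar j^{2}$, and exchanging $x$ and $\bar x$ finishes this case. If $t_{cut}(x_0)=t_{conj}(x_0)$, then $x_0\in\mathrm{dom}(t_{conj})$, which is open; on a neighbourhood $V\subset\mathrm{dom}(t_{conj})$ on which $t_{conj}$ is $L$-Lipschitz, conjugate base points are handled with constant $L$, while a separating $\bar x\in V$ — where $j(\bar x)$ may be tiny, but then the gap $t_{conj}(\bar x)-t_{cut}(\bar x)$ is tiny by continuity (it vanishes at $x_0$) — is handled by combining the corner estimate (valid for $|x-\bar x|$ up to order $j(\bar x)^2\bigl(t_{conj}(\bar x)-t_{cut}(\bar x)\bigr)$) with $t_{cut}(x)\le t_{conj}(x)\le t_{conj}(\bar x)+L|x-\bar x|$; this shows every point of $V$ has a neighbourhood on which $t_{cut}$ is Lipschitz, and a function that is Lipschitz near each point of an open connected set is locally Lipschitz there (Lebesgue number plus chaining along segments).

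I expect the reconciliation near the conjugate locus in the last step to be the main obstacle: there the ``separating'' and ``conjugate'' behaviours interpolate, the corner constant blows up like $j^{-2}$, and one must trade it carefully against the $t_{conj}$-bound, which controls $t_{cut}$ from above only by $t_{conj}$, not by $t_{cut}$ itself. The other technical heart of the argument is the quantitative corner-rounding lemma — that a broken extremal of a $C^{2,1}$ Tonelli Lagrangian can be shortened by an amount linear in the rounding parameter-length — whose proof is a second-variation computation localized at the corner, relying on (H2).
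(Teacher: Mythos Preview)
Your decomposition into conjugate and separating base points, with Theorem~\ref{THMconjLIP} handling the former, matches the paper's strategy. The genuine gap is exactly where you locate it: your corner-rounding estimate at a separating $\bar x$ produces a one-sided bound $t_{cut}(x)\le t_{cut}(\bar x)+(C/j(\bar x)^{2})|x-\bar x|$, and this constant blows up as $j(\bar x)\to 0$. Your proposed patching does not close the gap. Falling back on $t_{cut}(x)\le t_{conj}(\bar x)+L|x-\bar x|$ gives only $t_{cut}(x)-t_{cut}(\bar x)\le (t_{conj}(\bar x)-t_{cut}(\bar x))+L|x-\bar x|$; the first term is small near $\hat S=\{t_{cut}=t_{conj}\}$ by continuity, but it is small in $|\bar x-x_0|$, not in $|x-\bar x|$, so this is not a Lipschitz inequality. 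Nor can you pass from pointwise upper bounds to Lipschitz on a neighbourhood by ``exchanging roles'': the function $\bar x\mapsto \mathrm{diam}\,\partial u(\exp(\bar x,t_{cut}(\bar x)))$ is only upper semicontinuous, so in any neighbourhood of a conjugate point there are separating base points with arbitrarily small jump, hence arbitrarily large corner constants. Thus you do not obtain property (PL) with a uniform constant on any neighbourhood of a point of $\hat S$. (A minor point: you have the two semicontinuities of $t_{cut}$ swapped --- closedness of $\mathrm{Cut}(u)$ gives upper semicontinuity, the minimizing property gives lower.)

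The paper avoids this obstacle by a mechanism your corner argument cannot see. Given the second extremal reaching $\bar y$ with momentum $p'$, one takes a quadratic upper support $g$ of $u$ at $\bar y$ with slope $p'$ (from semiconcavity), sets $\Psi(x,t)=g(\exp(x,t))-\int_0^t L$, and applies the implicit function theorem. One computes $\partial_t\Psi(\bar x,\bar t)=\langle p'-\bar p,\partial H/\partial p(\bar y,\bar p)\rangle$, which by strict convexity of $\{H(\bar y,\cdot)=0\}$ is bounded below by $c\,|p'-\bar p|^{2}$. The decisive observation is that, after integrating by parts and using Euler--Lagrange, $\langle\partial_x\Psi(\bar x,\bar t),\nu\rangle=\langle p'-\bar p,\bar h_\nu(\bar t)\rangle$; since the Hamiltonian flow preserves the symplectic form and the two extremals meet at $\bar y=\exp(\bar x,\bar t)=\exp(x',t')$, one gets $|\langle p'-\bar p,\bar h_\nu(\bar t)\rangle|\le D\,|x'-\bar x|^{2}$. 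Hence $\partial_x\Psi$ and $\partial_t\Psi$ are of the \emph{same} quadratic order in $|p'-\bar p|\sim|x'-\bar x|$, and their ratio --- the Lipschitz constant of the implicit function $\tau$ --- is bounded uniformly, regardless of how small the momentum jump is. This symplectic cancellation is the new ingredient; with it one gets a uniform (PL) constant on $\{0<\mathrm{diam}\,\partial u(\exp(\cdot,t_{cut}))<\rho\}$, which together with $\hat S$ and the large-diameter set covers $S$.
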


As a corollary, as it is done in \cite{ln05}, since 
$$
\mbox{Cut} (u) = \left\{ \exp(x,t_{cut}(x)) \ \vert \ x \in S\right\},
$$
we deduce that the cut-locus of $u$ has a finite $(n-1)$-dimensional Hausdorff measure. Note that it can also be shown (see \cite{cs04,frv,pignotti02}) that, if $H$ and $S=\partial \Omega$ are of class $C^{\infty}$, then  the set $
\Gamma (u)$ has Hausdorff dimension less or equal than $n-2$. 




\section{Proofs of Theorems \ref{THMconjLIP} and \ref{THMconjLSC}} \label{SecProofConj}

\subsection{Proof of Theorem \ref{THMconjLIP}}\label{secSYMP}

Before giving the proof of the theorem, we recall basic facts in symplectic geometry. We refer the reader to \cite{am78,cannas01} 
for more details. \\

The symplectic canonical form $\sigma$ on $\R^n \times \R^n$ is
given by
$$
\sigma \left( \left( \begin{array}{c} h_1\\ v_1 \end{array}
\right) ,  \left( \begin{array}{c} h_2 \\ v_2 \end{array} \right)
\right)= \left\langle \left( \begin{array}{c} h_1\\ v_1 \end{array}
\right) , J \left( \begin{array}{c} h_2 \\ v_2 \end{array}
\right)\right\rangle,
$$
where $J$ is the $2n \times 2n$ matrix defined as
$$
J = \left( \begin{array}{cc}
0_n & I_n  \\
-I_n & 0_n
\end{array}
\right).
$$
It is worth noticing that any Hamiltonian flow in $\R^n \times \R^n$ preserves the symplectic form. That is, if $(x(\cdot),p(\cdot))$ is a trajectory of (\ref{sysH}) on the interval $[0,T]$, then for every $(h_1,v_1), (h_2,v_2) \in \R^n \times \R^n$ and every $t\in [0,T]$, we have
$$
\sigma \left( \left( \begin{array}{c} h_1\\ v_1 \end{array}
\right) ,  \left( \begin{array}{c} h_2 \\ v_2 \end{array} \right)
\right) = \sigma \left( \left( \begin{array}{c} h_1(t)\\ v_1(t) \end{array}
\right) ,  \left( \begin{array}{c} h_2(t) \\ v_2(t) \end{array} \right)
\right),
$$
where $(h_i(\cdot),v_i(\cdot))$ (with $i=1,2$) denotes the solution on $[0,T]$ to the linearized Hamiltonian system (see (\ref{HamSysLin}) below) along $(x(\cdot),p(\cdot))$ with initial condition $(h_i,v_i)$ at $t=0$.   
We recall that a vector space $J \subset \R^n \times \R^n$ is called \textit{Lagrangian} if it a $n$-dimensional vector space where the symplectic form $\sigma$ vanishes. If a $n$-dimensional vector subspace $J$ of $\R^n \times \R^n$ is transversal to the vertical subspace, that is $J\cup \{0\} \times \R^n =\{0\} $, then there is a $n\times n$ matrix $K$ such that 
$$
J = \left\{ \left( \begin{array}{c} h \\ Kh \end{array}
\right)  \ \vert \ h\in \R^n \right\}.
$$
It can be checked that $J$ is Lagrangian if and only if $K$ is a symmetric matrix. \\

Let $x\in S$ be fixed. Denote by $(x(\cdot),p(\cdot))$ the solution to the Hamiltonian system (\ref{sysH}) on $[0,T(x))$ satisfying $(x(0),p(0))=(x,du(x))$. The linearized  Hamiltonian system along $(x(\cdot),p(\cdot))$ is given by 
\begin{eqnarray}
\label{HamSysLin}
\left\{ \begin{array}{ccc}
\dot{h}(t) & = & B(x,t)^* h(t) + Q(x,t) v(t) \\
\dot{v}(t) & = & -A(x,t) h(t) - B(x,t) v(t),
\end{array}
\right.
\end{eqnarray}
where the matrices $A(x,t)$, $B(x,t)$ and $Q(x,t)$ are respectively given by
$$
\frac{\partial^2 H}{\partial x^2}(x(t),p(t)), \quad
\frac{\partial^2 H}{\partial x\partial p}(x(t),p(t)), \quad
\frac{\partial^2 H}{\partial p^2}(x(t),p(t)),
$$
and where $B(x,t)^*$ denotes the transpose of $B(x,t)$. Define the matrix
$$
M(x,t):=\left(
\begin{array}{cc}
B(x,t)^* & Q(x,t) \\
-A(x,t) & -B(x,t)\end{array}\right),
$$
and denote by $R(x,t)$ the $2n \times 2n$ matrix solution of
$$
\begin{cases}
\displaystyle \frac{\partial R}{\partial t}(x,t)=M(x,t)R(x,t)\\
R(x,0)=I_{2n}.
\end{cases}
$$
Finally, let us set the following spaces (for every $t\in [0,T(x))$):
\begin{eqnarray*}
J(x,t)& := &\left\{ R(x,t)^{-1} \left( \begin{array}{c} 0 \\ w
\end{array} \right) \ \vert \ w \in \R^n \right\},\\
U(x)&:=&\left\{ \left(\begin{array}{c}h\\
D^2u(x)h \end{array}\right) \ \vert \ h\in\R^n \right\}.
\end{eqnarray*}
The following result is the key tool in the proofs of Theorems \ref{THMconjLIP} and \ref{THMconjLSC}.

\begin{lemma} \label{lem:conj1}
The following properties hold:
\begin{itemize}
\item[(i)] The spaces $J(x,t)$ (for all $t\in (0,T(x))$) and $U(x)$ are Lagrangian subspaces of $\R^n \times \R^n$; moreover, one has 
 $$
t_{conj} (x) = \min \left\{ t \geq 0 \ \vert \ J(x,t) \cap U(x)
\neq \{0\} \right\}.
$$
\item[(ii)] For every $t \in (0,t_{conj}(x)]$, the space $J(x,t)$ is transversal to the vertical subspace, that is  
\begin{equation*}
J(x,t) \cap \left( \{0\} \times \R^n \right) = \{0\} \qquad
\forall t \in (0,t_{conj}(x)].
\end{equation*}
\item[(iii)] If we denote for every $t \in (0,t_{conj}(x)]$, by $K(x,t)$ the
symmetric matrix such that
$$
J(x,t) = \left\{
\left(\begin{array}{c}h\\K(x,t)h\end{array}\right) \ \vert \ h \in
\R^n \right\},
$$
then the mapping $t \in [0,T(x)) \mapsto K(x,t)$ is of class $C^{k-1,1}$. Moreover there is a continuous function $\delta >0$ which is defined on the domain of the exponential mapping such that 
$$
\dot{K} (x,t) := \frac{\partial }{\partial t} K(x,t) \geq \delta (x,t) I_n \qquad \forall t \in (0,T(x)).
$$
\end{itemize}
\end{lemma}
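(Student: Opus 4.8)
The plan is to prove (i) by elementary symplectic linear algebra, and to prove (ii) and (iii) together, the crux being an explicit formula for $\dot K(x,t)$ combined with a monotonicity–boundedness argument. For (i): the space $U(x)$ is the graph of the symmetric matrix $D^2u(x)$ (well defined on $S$ since $u$ is $C^{k,1}$, hence $C^2$, near $S$ by (P2)), so it is Lagrangian by the criterion recalled just before the statement; and since $R(x,t)$ is the flow of the linear Hamiltonian system (\ref{HamSysLin}) it preserves $\sigma$, so $J(x,t)=R(x,t)^{-1}(\{0\}\times\R^n)$, being the image under a symplectic map of the (Lagrangian) vertical, is Lagrangian. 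For the formula for $t_{conj}$, I would differentiate the identity $H(y,du(y))\equiv0$ valid near $S$ to get $-\frac{\partial H}{\partial x}(x,du(x))=D^2u(x)\,\frac{\partial H}{\partial p}(x,du(x))$, so the Hamiltonian vector field at $(x,du(x))$ lies in $U(x)$; a direct computation then shows that for $(h_0,\tau)\in T_xS\times\R$ the vector $d\exp(x,t)(h_0,\tau)$ equals the $h$-component at time $t$ of the solution of (\ref{HamSysLin}) issued from $(w,D^2u(x)w)\in U(x)$ with $w=h_0+\tau\,\frac{\partial H}{\partial p}(x,du(x))$. Because $t\mapsto\exp(x,t)$ is transversal to $S$ at $t=0$ (used already in the proof of Lemma \ref{LEMconjcut}), the map $(h_0,\tau)\mapsto w$ is a linear isomorphism $T_xS\times\R\to\R^n$, so $d\exp(x,t)$ is singular iff $J(x,t)\cap U(x)\neq\{0\}$; and $\{t\in(0,T(x)):J(x,t)\cap U(x)\neq\{0\}\}$ is closed (normalize the $h$-part of an intersecting vector and pass to the limit, using continuity of $R(x,\cdot)$ and of $D^2u(x)$) and avoids small $t>0$ (near $t=0$ the isomorphism makes $d\exp$ nonsingular), so its infimum is attained and equals $t_{conj}(x)$.

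For (ii) and (iii), write $R(x,t)$ in $n\times n$ blocks $R_{ij}(x,t)$. Then $(0,v)\in J(x,t)$ iff $R_{12}(x,t)v=0$, hence $J(x,t)$ is transversal to the vertical iff $R_{12}(x,t)$ is invertible, in which case $J(x,t)=\{(h,K(x,t)h)\}$ with $K(x,t)=-R_{12}(x,t)^{-1}R_{11}(x,t)$ (symmetry following from the symplectic relations for $R$). Differentiating, using $\dot R=M(x,t)R$ together with $R_{11}R_{12}^*=R_{12}R_{11}^*$ and $R_{22}R_{11}^*-R_{21}R_{12}^*=I_n$ (both consequences of $R$ symplectic), I expect the clean identity
\[ \dot K(x,t)=R_{12}(x,t)^{-1}\,Q(x,t)\,R_{12}(x,t)^{-*}, \]
which is positive definite since $Q(x,t)=\frac{\partial^2 H}{\partial p^2}$ is positive definite by (H2). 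Moreover $R_{12}(x,0)=0$ and $\dot R_{12}(x,0)=Q(x,0)$, so $R_{12}(x,t)=tQ(x,0)+o(t)$: thus $R_{12}(x,t)$ is invertible for small $t>0$, and there $K(x,t)=-\tfrac1t Q(x,0)^{-1}+o(1/t)\to-\infty$ as $t\to0^+$.

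The comparison argument for (ii) then goes as follows. Suppose $R_{12}(x,\cdot)$ failed to be invertible somewhere on $(0,t_{conj}(x)]$, and let $t^*$ be the first such time; then $t^*>0$, $R_{12}(x,t^*)$ is singular, and $R_{12}(x,t)$ is invertible on $(0,t^*)$, where $K(x,t)$ is defined, symmetric, $C^1$ and strictly increasing ($\dot K>0$), with $K(x,t)\to-\infty$ as $t\to0^+$. Since $t^*\le t_{conj}(x)$ there is no conjugate time in $(0,t^*)$, i.e.\ $\det\big(K(x,t)-D^2u(x)\big)\neq0$ there; as $K(x,t)-D^2u(x)$ is negative definite for small $t$, nondecreasing and never singular on $(0,t^*)$, its largest eigenvalue stays negative, so $K(x,t)<D^2u(x)$ on $(0,t^*)$. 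Hence $K(x,\cdot)$ is nondecreasing and bounded above, so it converges to a symmetric $K^*$ as $t\to t^{*-}$; then $\{(h,K(x,t)h)\}\to\{(h,K^*h)\}$, while $t\mapsto J(x,t)$ is continuous at $t^*$ (continuity of $R(x,\cdot)$ and of inversion), so $J(x,t^*)=\{(h,K^*h)\}$ is transversal to the vertical — i.e.\ $R_{12}(x,t^*)$ is invertible, a contradiction. This proves (ii); consequently $K(x,\cdot)=-R_{12}(x,\cdot)^{-1}R_{11}(x,\cdot)$ is defined on a neighborhood of $(0,t_{conj}(x)]$ and is $C^{k-1,1}$ there (since $R(x,\cdot)$ is: $M(x,\cdot)$ is $C^{k-2,1}$, being $D^2H$ composed with the $C^{k,1}$ trajectory $(x(\cdot),p(\cdot))$, and $R$ gains one derivative), and $\dot K(x,t)\ge\delta(x,t)I_n$, with $\delta(x,t):=\lambda_{\min}\big(R_{12}(x,t)^{-1}Q(x,t)R_{12}(x,t)^{-*}\big)>0$ a continuous function of $(x,t)$ on the domain of $\exp$; this gives (iii).

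The main obstacle is (ii). Two points need care: first, extracting the formula for $\dot K$ from the symplectic identities for $R$ is a short but error-prone block computation; second, and more essentially, running the monotonicity argument correctly — using $\dot K>0$ and the absence of conjugate times to keep $K-D^2u(x)$ negative definite (hence $K$ bounded above) up to $t^*$, and then invoking continuity of $t\mapsto J(x,t)$ in the Lagrangian Grassmannian to deduce invertibility of $R_{12}(x,t^*)$ and close the contradiction. The low regularity of $H$ (only $C^{k,1}$, so $Q$ only $C^{k-2,1}$) is harmless: $k\ge2$ already makes $R(x,\cdot)$ of class $C^{1,1}$, so $K(x,\cdot)$ is $C^1$ and the identity for $\dot K$ holds pointwise.
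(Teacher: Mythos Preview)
Your argument is correct. For (i) and for the derivative formula in (iii) you do essentially what the paper does (your block computation yielding $\dot K=R_{12}^{-1}QR_{12}^{-*}$ is equivalent to the paper's symplectic-form computation $\langle \dot K h,h\rangle=\langle Qw_t,w_t\rangle$, since the map $h\mapsto w_t$ is precisely $-R_{12}^{-*}$).

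Your treatment of (ii), however, is genuinely different. The paper first performs a linear change of coordinates to make $D^2u(x)=0$, so that $U(x)=\R^n\times\{0\}$, and then parametrizes $J(x,s)$ as a graph over the \emph{vertical} variable, $J(x,s)=\{(\mathcal K(s)v,v)\}$; this $\mathcal K$ is defined on all of $[0,t)$ (with $\mathcal K(0)=0$), satisfies $\langle v,\dot{\mathcal K}(s)v\rangle<0$, and the contradiction comes from an integral inequality showing $\langle v_k,\mathcal K(t-1/k)v_k\rangle$ cannot tend to $0$. You instead keep the parametrization over $h$ (the same $K$ as in (iii)), use $\dot K>0$ together with the absence of conjugate times on $(0,t^*)$ to pin $K(x,\cdot)$ below $D^2u(x)$, and then invoke monotone convergence for symmetric matrices plus continuity of $t\mapsto J(x,t)$ in the Lagrangian Grassmannian to reach the contradiction. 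Your route unifies (ii) and (iii) around a single formula and avoids the coordinate change; the paper's route avoids the blow-up $K(x,t)\to-\infty$ at $t=0^+$ and the limit argument at $t^*$, trading them for an integral estimate. Both are clean; yours is arguably more streamlined.
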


\begin{proof}
Let us prove assertion (i). The fact that $J(t,x)$ and $U(x)$ are Lagrangian subspaces of $\R^n \times \R^n$  is easy, its proof is left to the reader. Suppose that there exists
$$
0 \neq \left(\begin{array}{c}h \\v\end{array}\right)\in J(x,t)\cap U(x).
$$
On the one hand, for a solution of (\ref{HamSysLin}) with initial data
$(h, v)$, we have that $h(t)=0$, since $(h,v)$ is in $J(x,t)$. On
the other hand, since $(h,v)\in U(x)$, $d \exp(x,t)h=h(t)=0$ with
$h\neq 0$, i.e. $d\exp(x,t)$ is singular. Conversely, if $d \exp(x,t)$ is singular for some $t\in (0,T(x))$, then there is $h\neq 0$ such that $d \exp(x,t)h=0$. Then there exists $v(t)\in\R^n$ such that
$$
R(x,t)\left(\begin{array}{c}h\\D^2 u(x)h\end{array}\right)=
\left(\begin{array}{c}d
\exp(x,t)h\\v(t)\end{array}\right)=\left(\begin{array}{c}0\\v(t)\end{array}\right),
$$
that is, $(h,D^2u(x)h)\in J(x,t)\cap U(x)$.

Let us prove assertion (ii). We argue by contradiction and assume that there is $t \in (0,t_{conj}(x)]$ such that
$J(x,t) \cap \left( \{0\} \times \R^n\right) \neq \{0\}$. By definition of $t_{conj}(x)$, we deduce that  
\begin{eqnarray}
\label{eq9mai0}
J(x,s) \cap U(x) = \{0\} \qquad \forall s \in [0,t).
\end{eqnarray}
Doing a change of coordinates if necessary, we may assume that $D^2u(x)=0$, that is 
$$
U(x) =\R^n \times \{0\}.
$$
By (\ref{eq9mai0}), we know that, for every $s\in [0,t)$, $J(x,s)$ is a Lagrangian subspace which is transversal to $U(x)$ . Hence there is, for every $s\in [0,t)$, a symmetric $n\times n$ matrix $\mathcal{K}(s)$ such that 
\begin{eqnarray}
\label{eq9mai1}
J(x,s) =  \left\{ \left( \begin{array}{c} \mathcal{K}(s)v \\ v \end{array}
\right)  \ \vert \ v \in \R^n \right\}.
\end{eqnarray}
Let us use the following notation: we split any matrix $R$ of the form $2n\times 2n$ in four matrices
$n\times n$ so that
$$
R=\left( \begin{array}{cc} R_1 & R_2\\
                            R_3 & R_4 \end{array}\right).
$$
Indeed, for any fixed $w\in \R^n$ and any $s\in [0,t)$,
$$
R(x,s)^{-1}\left( \begin{array}{c} 0 \\
                                    w \end{array}\right)=
\left( \begin{array}{c} h_{w,s}\\
                        v_{w,s} \end{array}\right)=
\left( \begin{array}{c} \mathcal{K}(s) v_{w,s} \\
                        v_{w,s}\end{array}\right),
$$
where $h_{w,s}=\left(R(x,s)^{-1}\right)_2 w$ and
$v_{w,s}=\left(R(x,s)^{-1}\right)_4 w$. Thanks to (\ref{eq9mai0}), the matrix $\left(R(x,t)^{-1}\right)_4$
is non-singular for every $s\in(0,t)$, then we have 
$$
\mathcal{K}(s)=\left(R(x,s)^{-1}\right)_2 \left(R(x,s)^{-1}\right)_4^{-1}.
$$
This shows that the function $s \in [0,t) \mapsto \mathcal{K}(s)$ is if class $C^{k-1,1}$. We now proceed to compute the derivative of $\mathcal{K}$ at some $\bar{s}\in (0,t)$, that we shall denote by  $\dot{\mathcal{K}}(\bar{s})$. Let $v\neq 0\in \R^n$ be fixed, set $h_{\bar{s}}:= \mathcal{K}(\bar{s})v$ and consider the unique $w_{\bar{s}} \in \R^n$ satisfying
$$
R(x,\bar{s})\left(\begin{array}{c} h_{\bar{s}}\\
                v \end{array}\right)=
\left(\begin{array}{c} 0\\
               w_{\bar{s}} \end{array}\right) \qquad \forall s \in (0,t).
$$
Define the $C^1$ curve $\phi :(0,t) \rightarrow \R^n \times \times \R^n$ by  
$$
\phi(s)=\left(\begin{array}{c} h_s\\
                                    v_s \end{array}\right):=
R(x,s)^{-1}\left(\begin{array}{c} 0\\
                                        w_{\bar{s}} \end{array}\right) \qquad \forall s\in (0,t).
$$
The derivative of $\phi$ at $\bar{s}$ is given by
$$
\dot{\phi}(\bar{s}) = \frac{\partial}{\partial s} \left[ R(x,s)^{-1} \right] \left(\begin{array}{c} 0\\
 w_{\bar{s}} \end{array}\right) = - R(x,s)^{-1} M(x,s) \left(\begin{array}{c} 0\\
  w_{\bar{s}} \end{array}\right).
$$
 Thus,  since the Hamiltonian flow preserves the symplectic form, we have
\begin{eqnarray*}
\sigma(\phi(\bar{s}),\dot{\phi}(\bar{s})) &=& \sigma \left( R(x,\bar{s})^{-1} \left(\begin{array}{c} 0\\w_{\bar{s}} \end{array}\right),- R(x,\bar{s})^{-1}M(x,\bar{s})\left(\begin{array}{c}0\\w_{\bar{s}}
\end{array}\right)\right)\\
&=& \sigma  \left( \left(\begin{array}{c} 0\\w_{\bar{s}} \end{array}\right),- M(x,\bar{s})\left(\begin{array}{c}0\\w_{\bar{s}}
\end{array}\right)\right)\\
& = & \langle Q(x,\bar{s})w_{\bar{s}},w_{\bar{s}} \rangle.
\end{eqnarray*}
By construction, the vector $\phi (s)$ belongs to $J(x,s)$ for any $s\in (0,t)$. Hence, it can be written as
$$
 \phi(s)=\left(\begin{array}{c} h_s\\
v_s \end{array}\right) = \left(\begin{array}{c} \mathcal{K}(s) v_s\\
v_s \end{array}\right).
$$
Which means that
$$
\dot{\phi}(s) = \left(\begin{array}{c} \dot{\mathcal{K}}(s) v_s + \mathcal{K}(s) \dot{v}_s\\
\dot{v}_s \end{array}\right).
$$
Thus, we have (using that $v_{\bar{s}} =v$)
\begin{eqnarray*}
\sigma (\phi(\bar{s}),\dot{\phi}(\bar{s})) & = & \sigma \left( \left(\begin{array}{c} \mathcal{K}(\bar{s}) v\\
v \end{array}\right),  \left(\begin{array}{c} \dot{\mathcal{K}}(\bar{s}) v \\
\dot{v}_{\bar{s}} \end{array}\right) \right) + \sigma \left( \left(\begin{array}{c} \mathcal{K}(\bar{s}) v \\
v \end{array}\right),  \left(\begin{array}{c} \mathcal{K}(\bar{s}) \dot{v}_{\bar{s}} \\
0 \end{array}\right) \right) \\
& = & \left\langle  \left(\begin{array}{c} \mathcal{K}(\bar{s}) v \\
v \end{array}\right), \left(\begin{array}{c} \dot{v}_{\bar{s}}\\ -\dot{\mathcal{K}}(\bar{s}) v  \end{array}\right) \right\rangle + \left\langle \left(\begin{array}{c} \mathcal{K}(\bar{s}) v \\
v \end{array}\right),  \left(\begin{array}{c} 0 \\ - \mathcal{K}(\bar{s}) \dot{v}_{\bar{s}} \end{array}\right) \right\rangle \\
& = & \langle \mathcal{K}(\bar{s}) v, \dot{v}_{\bar{s}} \rangle - \langle v, \dot{\mathcal{K}}(\bar{s}) v \rangle - \langle  v, \mathcal{K}(\bar{s}) \dot{v}_{\bar{s}}  \rangle \\
& = &  - \langle v, \dot{\mathcal{K}}(\bar{s}) v \rangle,
\end{eqnarray*}
since $\mathcal{K}(\bar{s})$ is symmetric. Finally, we deduce that 
\begin{eqnarray}
\label{eq9mai33}
\langle v,\dot{\mathcal{K}}(\bar{s})v \rangle = -\langle w_{\bar{s}},  Q(x,\bar{s})w_{\bar{s}}\rangle <0.
\end{eqnarray}
By assumption, we know that $J(x,t) \cap \left( \{0\} \times \R^n \right) \neq \{0\}$, which can also be written as 
$$
J(x,t) \cap J(x,0) \neq \{0\}.
$$
This means that there is $v \neq 0$ and a sequence $\left\{\left(\begin{array}{c} h_k \\
v_k \end{array}\right)\right\}$ in $\R^n \times \R^n$ such that 
$$
\lim_{k \rightarrow \infty} \left(\begin{array}{c} h_k \\
v_k \end{array} \right) = \left(\begin{array}{c} 0 \\
v \end{array} \right) \quad \mbox{and} \quad \left(\begin{array}{c} h_k \\
v_k \end{array}\right) \in J(x,t-1/k) \quad \forall k \mbox{ large enough in } \N.
$$
But we have for any large $k\in \N$, $h_k=\mathcal{K}(t-1/k)v_k$. Hence we deduce that   $\lim_{k\rightarrow \infty} \mathcal{K}(t-1/k)v_k=0$. But, thanks to (\ref{eq9mai33}) we have for $k$ large enough
$$
\langle v_k, \mathcal{K}(t-1/k)v_k \rangle = \int_0^{t-1/k} \langle v_k, \dot{\mathcal{K}}(s)v_k \rangle ds \leq \int_0^{t/2}   \langle v_k, \dot{\mathcal{K}}(s)v_k \rangle ds.
$$
But 
$$
\lim_{k\rightarrow \infty} \int_0^{t/2}   \langle v_k, \dot{\mathcal{K}}(s)v_k \rangle ds =  \int_0^{t/2}   \langle v, \dot{\mathcal{K}}(s)v \rangle ds <0.
$$
This contradicts the fact that $\lim_{k\rightarrow \infty} \mathcal{K}(t-1/k) v_k=0$ and concludes the proof of assertion (ii). We note that another way to prove (ii) would have been to use the theory of Maslov index, see \cite{agrachev}.\\
It remains to prove (iii). By (ii),  for every $t\in (0,t_{conj}(x)]$, the matrix $\left(R(x,t)^{-1}\right)_2$ is nonsingular and the matrix $K(x,t)$ is given by
$$
K(x,t)=\left(R(x,t)^{-1}\right)_4 \left(R(x,t)^{-1}\right)_2^{-1}.
$$
This shows that the function $t\in (0,t_{conj}(x)] \mapsto K(x,t)$ is of class $C^{k-1,1}$. Let us compute $\dot{K}(x,t)$ for some $t \in (0,t_{conj}(x)]$. Let $h \in \R^n$ be fixed, set $v_t:= K(x,t)h$ and consider the unique $w_t \in \R^n$ satisfying
$$
R(x,t)\left(\begin{array}{c} h\\
                v_t \end{array}\right)=
\left(\begin{array}{c} 0\\
               w_t \end{array}\right),
$$
that is 
$$
w_t = \left[ R(x,t)_3 +R(x,t)_4\right] h.
$$
Define the  $\C^1$ curve $\varphi: (0,t_{conj}] \rightarrow \R^n \times \R^n$ by
$$
\varphi(s)=\left(\begin{array}{c} h_s\\
                                    v_s \end{array}\right):=
R(x,s)^{-1}\left(\begin{array}{c} h\\
                                        v_t \end{array}\right), \qquad \forall s\in (0,t_{conj}(x)].
$$
As above, on the one hand we have 
\begin{eqnarray*}
\sigma(\varphi(t),\dot{\varphi}(t)) &=& \sigma \left( R(x,t)^{-1} \left(\begin{array}{c} 0\\w_{t} \end{array}\right),- R(x,t)^{-1}M(x,t)\left(\begin{array}{c}0\\w_{t}
\end{array}\right)\right)\\
&=& \sigma  \left( \left(\begin{array}{c} 0\\w_{t} \end{array}\right),- M(x,t)\left(\begin{array}{c}0\\w_{t}
\end{array}\right)\right)\\
& = & \langle Q(x,t)w_{t},w_{t} \rangle.
\end{eqnarray*}
On the other hand, using the fact that $\varphi(s) \in J(x,s)$ for any $s$, we also have 
$$
\sigma(\varphi(t),\dot{\varphi}(t)) = \langle K(x,t) h, h \rangle.
$$
For every $t\in (0,t_{conj}(x)]$, the linear operator $:\Psi (x,t): h\mapsto w_t := \left[ R(x,t)_3 +R(x,t)_4\right] h$ is invertible. If we denote, for every $t\in (0,t_{conj}(x)]$, by $\lambda(x,t) >0$, the smallest eigenvalue of the symmetric   matrix $Q(x,t)$, then we have for any $h\in \R^n$,
\begin{eqnarray*}
\langle K(x,t) h , h\rangle = \langle Q(x,t)w_{t},w_{t} \rangle & \geq & \lambda (x,t) |w_t|^2 \\ 
& \geq & \lambda (x,t) \|\Psi(x,t)^{-1}\|^{-2} |h|^2.
\end{eqnarray*}
The function $\delta$ defined as 
$$
\delta (x,t) := \lambda (x,t) \|\Psi(x,t)^{-1}\|^{-2} \qquad \forall x\in S, \forall t\in (0,t_{conj}(x)],
$$
depends continuously on $(x,t)$. This concludes the proof of Lemma  \ref{lem:conj1}.
\end{proof}

We are now ready to prove Theorems \ref{THMconjLIP}.

\begin{proof}[Proof of Theorem \ref{THMconjLIP}]

Let $\bar{x}\in S$ such that $\bar{t} :=t_{conj}(\bar{x}) <\infty$ be fixed. By Lemma  \ref{lem:conj1}, there is $h \in \R^n$ with $|h|=1$ such that $K(\bar{x},\bar{t})h=D^2u(\bar{x})h$. There is $\rho>0$ such that the function $\Psi : \left( S \cap B(\bar{x},\rho) \right) \times (\bar{t}-\rho,\bar{t}+\rho) \rightarrow \R $ defined by 
\begin{eqnarray}
\label{Psi}
\Psi (x,t) := \langle \left[K(x,t) -D^2u(x)\right] h, h\rangle,
\end{eqnarray}
is well-defined (note that $\Psi(\bar{x},\bar{t}) =0$). The function $\Psi$ is locally Lipschitz in the $x$ variable and of class $C^{k-1,1}$ in the $t$ variable. Moreover, restricting $\rho$ if necessary, we may assume that 
$$
\frac{\partial \Psi }{\partial t} (x,t) = \langle \dot{K}(x,t) h,h\rangle \geq \delta (x,t) \geq \frac{1}{2} \delta (\bar{x},\bar{t})>0 \qquad \forall x \in S \cap B(\bar{x},\rho), \forall t \in  (\bar{t}-\rho,\bar{t}+\rho).
$$
Thanks to the Clarke Implicit Function Theorem (see \cite[Corollary p. 256]{clarke83}), there are an open neighborhood $\mathcal{V}$  of $\bar{x}$ and a Lipschitz function $\tau :\mathcal{V} \rightarrow \R$ such that 
$$
\Psi ( x,\tau (x)) =0 \qquad \forall x \in \mathcal{V}.
$$
This shows that for every $x\in \mathcal{V}$, $t_{conj}(x)$ is finite. To prove that $t_{conj}$ is locally Lipschitz on its domain, it suffices to show that  for every $\bar{x}$ in the domain of $t_{conj}$, there is a constant $K>0$ and an open neighborhood $\mathcal{V}$ of $\bar{x}$ such that for every $x\in \mathcal{V}$, there is a neighborhood $\mathcal{V}_x$ of $x$ in $S$ and a function $\tau_x :\mathcal{V}_x \rightarrow \R$ which is $K$-Lipschitz and which satisfies
$$
\tau_x(x) = t_{conj}(x) \quad \mbox{and} \quad t_{conj}(y) \leq \tau_x(y) \quad \forall y\in \mathcal{V}_x.
$$
In the proof above, the Lipschitz constant of $\tau$ depends only on the Lipschitz constant of $\Psi$ and on a lower bound on $\delta(\bar{x},\bar{t})$. The result follows.   
\end{proof}

\subsection{Proof of Theorem \ref{THMconjLSC}}

Let $\bar{x}\in S$ in the domain of $t_{conj}(x)$. By Lemma  \ref{lem:conj1}, there is $h \in \R^n$ with $|h|=1$ such that $K(\bar{x},\bar{t})h=D^2u(\bar{x})h$. There is $\rho>0$ such that the function $\Psi : \left( S \cap B(\bar{x},\rho) \right) \times (\bar{t}-\rho,\bar{t}+\rho) \rightarrow \R $ defined by (\ref{Psi}) is well-defined. Since $k\geq 3$, $\Psi$ is at least of class $C^{1,1}$. Moreover, $\Psi (\bar{x},\bar{t})=0$ and
$$
\frac{\partial \Psi }{\partial t} (\bar{x},\bar{t}) = \langle \dot{K}(\bar{x},\bar{t}) h,h\rangle \geq \delta (\bar{x},\bar{t}) >0.
$$
By the usual Implicit Function Theorem, there exist a an open ball $\mathcal{B}$  of $\bar{x}$ and a $C^{1,1}$ function $\tau :\mathcal{B} \rightarrow \R$ such that 
$$
\Psi ( x,\tau (x)) =0 \qquad \forall x \in \mathcal{B}.
$$
This means that we have 
$$
\tau (\bar{x}) = t_{conj}(\bar{x}) \quad \mbox{and} \quad t_{conj}(x) \leq \tau(x) \qquad \forall x\in \mathcal{B}.
$$
Moreover, derivating $\Psi(x,\tau (x))=0$ yields
$$
\nabla \tau (x) = -\frac{\frac{\partial \Psi}{\partial x}(x,\tau(x))}{\frac{\partial \Psi}{\partial t}(x,\tau(x)} \qquad \forall x \in \mathcal{B}.
$$
This shows that the Lipschitz constant of $\nabla \tau$ as well as the radius of $\mathcal{B}$ are controlled by the Lipschitz constants of $\frac{\partial K}{\partial x}$ and $D^2u$. This proves that $t_{conj}(x)$ is locally semiconcave on its domain. 

\section{Proof of Theorem \ref{THMcut}}\label{sec3}

We have to show that there is $L>0$ such that the following property holds:
\begin{itemize}
\item[(PL)] For every $x\in S$, there are a neighborhood $\mathcal{V}_x$ of $x\in S$ and a $L$-Lipschitz function $\tau_x: \mathcal{V}_x \rightarrow \R$ satisfying
$$
\tau_x (x)=t_{cut}(x) \quad \mbox{and} \quad t_{cut}(y)\leq \tau_x (y) \qquad \forall y \in \mathcal{V}_x. 
$$
\end{itemize}
First, we claim that $t_{cut}$ is continuous on $S$. Let $x\in S$ be fixed and $\{x_k\}$ be a sequence of  points in $S$ converging to $x$ such that $t_{cut}(x_k)$ tends to $T$ as $k$ tends to $\infty$. Since a sequence of "minimizing curves" is still minimizing, we know by Lemma \ref{ceases} that $t_{cut}(x)\geq T$. But each point $\exp(x_k,t_{cut}(x_k))$ belongs to $\mbox{Cut}(u)$. So, since $\mbox{Cut}(u)$ is closed, the point $\exp(x,T)$ belongs to $\mbox{Cut}(u)$. This proves the continuity of $t_{cut}$.\\

Let $\hat{S}\subset S$ be the set defined by 
$$
\hat{S} := \left\{ x\in S \ \vert \ t_{conj}(x)=t_{cut}(x) \right\}.
$$
Since by continuity $t_{cut}$ is bounded, the set $\hat{S}$ is included in the domain of $t_{conj}$. Therefore, by Theorem \ref{THMconjLIP}, $\hat{S}$ is compact and there is $L_1>0$ such that $t_{cut}=t_{conj}$ is $L_1$-Lipschitz on $\hat{S}$ (in the sense of (PL)). \\

Let $\bar{x} \in S\setminus \hat{S}$ be fixed. Set $\bar{t}:=t_{cut}(\bar{x}), \bar{y}:=\exp (\bar{x},\bar{t}),$ and $(\bar{y},\bar{p}):=\phi_{\bar{t}}^H (\bar{x},du(\bar{x}))$. Since $\exp$ is not singular at $(\bar{x},\bar{t})$, one has 
$$
\diam \left(\partial u(\bar{y}))\right)=:\mu>0.
$$
This means that there is $x'\in S$ such that $\exp(x',t')=\bar{y}$ (with $t':=t_{cut}(x')$) and 
$$
|\bar{p}-p'|>\frac{\mu}{2},
$$
where $p'$ is defined by $(\bar{y},p')=\phi_{t'}^H (x',du(x'))$. Since $p'\in \partial u(\bar{y})=D^+u(\bar{y})$, by semiconcavity of $u$, there are $\delta,C>0$ such that 
\begin{eqnarray*}
u(y) \leq u(\bar{y}) + \langle p', y-\bar{y} \rangle + \frac{C}{2} |
y-\bar{y}|^2  \qquad \forall y \in B(\bar{y},\delta).
\end{eqnarray*}
Set $g(y):=u(\bar{y}) + \langle p', y-\bar{y} \rangle + C |
y-\bar{y}|^2$ for every $y\in B(\bar{y},\delta)$ and define the $C^1$  function $\Psi :S \times \R \rightarrow \R$ by
$$
\Psi (x,t) := g(\exp (x,t)) -\int_0^t L \left( \exp
(x,s),\frac{\partial \exp}{\partial s} (x,s) \right) ds.
$$
Note that $\Psi (\bar{x},\bar{t}) =0$. Moreover if $x\neq \bar{x}$ is such that $\exp(x,t) \in B(\bar{y},\delta)$ and $\Psi(x,t)=0$ for some $t>0$, then we have 
\begin{multline*}
u(\exp(x,t)) - \int_0^t  L \left( \exp (x,s),\frac{\partial \exp}{\partial s} (x,s) \right) ds \\
< g(\exp(x,t)) - \int_0^t  L \left( \exp (x,s),\frac{\partial \exp}{\partial s} (x,s) \right) ds=0.
\end{multline*}
Which means that $t_{cut}(x) \leq t$. Set for every $t\in [0,\bar{t}], \bar{\gamma}(t) := \exp (\bar{x},t)$. We have
\begin{eqnarray*}
\frac{\partial \Psi}{\partial t} (\bar{x},\bar{t}) & = & \langle p', \dot{\bar{\gamma}}(\bar{t}) \rangle  - L(\bar{\gamma} (\bar{t}),\dot{\bar{\gamma}}(\bar{t})) \\
& = & \langle p' -\bar{p}, \dot{\bar{\gamma}}(\bar{t}) \rangle  +
H(\bar{y},\bar{p}) = \langle p'-\bar{p},
\dot{\bar{\gamma}}(\bar{t})\rangle .
\end{eqnarray*}
Two cases may appear:\\

\noindent First case: there is $\rho>0$ such that $\mu\geq \rho$. Since the set $\{p\ \vert \ H(\bar{y},p)\leq 0)$ is uniformly convex, we deduce that the quantity 
$$
\frac{\partial \Psi}{\partial t} (\bar{x},\bar{t}) =  \langle p'-\bar{p},
\dot{\bar{\gamma}}(\bar{t})\rangle =  \langle p'-\bar{p}, \frac{\partial H}{\partial p} (\bar{y},\bar{p}) \rangle 
$$
is bounded from below by some constant $\epsilon(\rho)>0$. By the Implicit Function Theorem, there are an open ball $\mathcal{B}$ of $\bar{x}$ and a $C^1$ function $\tau: \mathcal{B} \cap S \rightarrow \R$ such that 
$$
\Psi (x,\tau(x)) =0 \qquad \forall x \in \mathcal{B} \cap S,
$$
where the Lipschitz constant of $\tau$ is bounded from above by $M/\epsilon(\rho)$, where $M$ denotes the Lipschitz constant of $\Psi$. This shows that there is $L_2>0$ such that $t_{cut}$ is $L_2$ -Lipschitz (in the sense of (PL)) on the set 
$$
S_{\rho} := \left\{ x \in S \ \vert \ \diam \left( \partial u (\exp (t_{cut}(x),x)) \right) \geq \rho \right\}.
$$
\\
\noindent Second case: $\mu$ is small enough. Without loss of generality, doing a global change of coordinates if necessary, we may assume that $S$ is an hyperplan in a neighborhood of $\bar{x}$ and that $D^2u(\bar{x})=0$. Set for every $s\in [0,\bar{t}]$,
$$
 \overline{L}_x(s):=\frac{\partial
L}{\partial x}(\bar{\gamma}(s),\dot{\bar{\gamma}}(s)), \quad \overline{L}_v(s):=\frac{\partial L}{\partial
v}(\bar{\gamma}(s),\dot{\bar{\gamma}}(s)),
$$
and 
$$
\overline{h}_{\nu}(s):= d\exp (\bar{x},s) ( \nu ) \qquad \forall \nu \in T_{\bar{x}} S \subset \R^n.
$$
Then
\begin{eqnarray*}
\langle \frac{\partial \Psi}{\partial x} (\bar{x},\bar{t}), \nu \rangle & = &
\langle p' , \frac{\partial \exp}{\partial
x}(\bar{x},\bar{t}) (\nu)
\rangle - \int_0^{\bar{t}} \langle \overline{L}_x(s), \overline{h}_{\nu} (s)\rangle  + \langle \overline{L}_v(s) , \dot{\overline{h}}_{\nu}(s) \rangle ds \\
& = & \langle p' , \frac{\partial \exp}{\partial
x}(\bar{x},\bar{t}) (\nu)
\rangle + \int_0^{\bar{t}} \langle \overline{L}_x(s) - \frac{d}{ds}\overline{L}_v(s), \overline{h}_{\nu} (s)\rangle  ds -   \left[ \langle \overline{L}_v(\cdot) , \overline{h}_{\nu}(\cdot) \rangle \right]_0^{\bar{t}} \\
& = & \langle p' , \frac{\partial \exp}{\partial x}(\bar{x},\bar{t}) (\nu) \rangle   -  \langle \overline{L}_v(\bar{t}) , \overline{h}_{\nu}(\bar{t}) \rangle \\
& = & \langle p' - \bar{p} ,  \overline{h}_{\nu}(\bar{t}) \rangle .
\end{eqnarray*}
Recall that $(\overline{h}_{\nu}(t),\bar{v}_{\nu}(t))$ is the solution of the linearized Hamiltonian
system (\ref{HamSysLin}) along $\bar{\gamma}$ starting at $\overline{h}_{\nu}(0)=\nu$ and $\overline{v}_{\nu}(0)=D^2u(\bar{x})\nu=0$. Let
us denote by $(h'(t),v'(t))$ the solution of (\ref{HamSysLin})  along $\bar{\gamma}$ such that
$h'(0)=x'-\bar{x}$ and $v'(0)=D^2u(\bar{x})(x'-x)=0$. Then, if $p' -\bar{p}$ is
small, $p' - \bar{p}$ equals $v'
(\bar{t})$ up to a quadratic term. But since the Hamiltonian flow  preserves the symplectic form, there is $D>0$ such that we have
for any $\nu\in T_{\bar{x}} S$ of norm one,
$$
\left|\langle \overline{h}_{\nu}(\bar{t}), v' (\bar{t})\rangle\right| = 
\left|\langle h' (\bar{t}) , \overline{v}_{\nu}(\bar{t})\rangle\right|
\leq D| x'-\bar{x}|^2,
$$
because\footnote{Just use Taylor's formula together with the fact that $\langle h'(\bar{t}),\dot{\bar{\gamma}}(\bar{t})\rangle =0$.} we know that $\exp (\bar{x},\bar{t}) = \exp (x',t')$. In conclusion, we have that $\frac{\partial \Psi}{\partial x}
(\bar{x},\bar{t})$ is bounded from above by $D' | x'-\bar{x}|^2$ for some $D'>0$. Besides, since $H(\bar{y},\bar{p})=H(\bar{y},p')=0$, we have,
by Taylor's formula,
$$
0 = \langle \frac{\partial H}{\partial p} (\bar{y},\bar{p}),
p'-\bar{p} \rangle + \frac{1}{2} \langle\frac{\partial^2
H}{\partial p^2}
(\bar{y},p)(p'-\bar{p}),p'-\bar{p}\rangle
$$
for some $p$ on the segment $[\bar{p},p']$. Therefore we
deduce that, for some $c>0$,
$$
\left| \frac{\partial \Psi}{\partial t}(\bar{z},\bar{t}) \right|
\geqslant c |p'-\bar{p}|^2,
$$
where we also have a positive constant $k$ such that
$|p'-\bar{p}| \geq k |x'-\bar{x}|$. Then, by the
Implicit Function Theorem, the function $\tau_{\bar{x}}(\cdot)$ is
well defined as the function such that
$\Psi(x,\tau_{\bar{x}}(x))=0$, and its gradient is bounded from above. This yields that if $\mu$ is taken small enough, then there there is $L_3$ such that $t_{cut}$ is $L_3$-Lipschitz on the set 
$$
S_{\rho}' := \left\{ x \in S \ \vert \ 0 < \diam \left( \partial u (\exp (t_{cut}(x),x)) \right) < \rho \right\}.
$$
This concludes the proof of Theorem \ref{THMcut}.

\section{Applications in Riemannian Geometry} \label{SecApp}

\subsection{}

Let $(M,g)$ be a smooth compact Riemannian manifold and $x\in M$ be fixed. The \textit{cut locus} of $x$, denoted by $\mbox{Cut}(x)$ is defined as the closure of the set of points $y$ such that there are at least two distinct minimizing geodesics between $x$ and $y$. The Riemannian distance to $x$, denoted by $d_g(x,\cdot)$, is locally semiconcave on $M\setminus \{x\}$. Then we have
$$
\mbox{Cut}(x) = \overline{\Sigma(d_g(x,\cdot)}.
$$
For every $v \in T_xM$, we denote by $\gamma_v$ the geodesic curve starting from $x$ with speed $v$. For every $v \in T_xM$, we set $\|v\|_x = g_x(v,v)$ and we denote by $\mathcal{S}_1^x$ the set of $v \in T_xM$ such that $\|v\|_x=1$. The \textit{distance function to the cut locus} (from $x$) $t_{cut}^x : \mathcal{S}_1^x \rightarrow \R$ is defined by
$$
t_{cut}^x (v) := \min \left\{ t \geq 0 \ \vert \ \gamma_v(t) \in \mbox{Cut}(x)\right\}.
$$
We prove easily that $t_{cut}^x$ is continuous on $\mathcal{S}_1^x$ (see \cite{sakai96}). 

\subsection{} 

Let $T^*M$ denote the cotangent bundle and $g$ be the cometric on $T^*M$, the Hamiltonian associated with $g$ is given by 
$$
H(x,p) = \frac{1}{2} \|p\|^2.
$$
For every $x\in M$, the Riemannian distance to $x$ which we denote from now by $d_g^x$ is a viscosity solution to the Eikonal equation
$$
H(x,du(x)) =\frac{1}{2} \qquad \forall x \in M \setminus \{x\}.
$$
The following result, due to Itoh and Tanaka \cite{it01}, can be seen (see \cite{riffordsard}) as a consequence of Theorem \ref{THMcut}.

\begin{theorem}
\label{THMRiem1}
The function $t_{cut}^x$ is Lipschitz on $\mathcal{S}_1^x$. 
\end{theorem}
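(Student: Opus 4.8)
The plan is to deduce Theorem \ref{THMRiem1} from Theorem \ref{THMcut} by setting up an appropriate instance of the Dirichlet problem (\ref{HJ}). Fix $x\in M$ and let $r:=\mathrm{inj}(x)>0$ be the injectivity radius of $(M,g)$ at $x$. For any $\e\in(0,r)$ the geodesic sphere $S_\e:=\{y\in M:d_g(x,y)=\e\}$ is a smooth compact hypersurface and $\Omega:=M\setminus\overline{B_g(x,\e)}$ is open with compact boundary $S_\e$. First I would check that the shifted Hamiltonian $H(y,p):=\frac12\|p\|^2-\frac12$ satisfies (H1)--(H3): uniform superlinearity and strict convexity in $p$ follow at once from the positive definiteness of the cometric together with compactness of $M$, and (H3) holds since $H(y,0)=-\frac12<0$. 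Its Legendre dual is $L(y,v)=\frac12\|v\|^2+\frac12$, whose action $\int_0^T L(\gamma,\dot\gamma)\,dt=\frac12\int_0^T\|\dot\gamma\|^2\,dt+\frac T2$ is minimized, for fixed endpoints, by unit-speed minimal geodesics, so the function $l$ of Section 1 equals $d_g$. Since every minimal geodesic from $x$ to a point $y\in\overline\Omega$ crosses $S_\e$ at distance $\e$ from $x$, one obtains $\inf\{l(z,y):z\in S_\e\}=d_g(x,y)-\e$, and Proposition \ref{PROP1} then identifies
$$
u(y):=d_g(x,y)-\e\qquad(y\in\overline\Omega)
$$
as the unique viscosity solution of (\ref{HJ}) for the data $(H,\Omega,S_\e)$.

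Next I would match the objects attached to this Dirichlet problem with the Riemannian ones. Since $\e<r$, the function $d_g(x,\cdot)$ is smooth on a neighbourhood of $\overline{B_g(x,\e)}\setminus\{x\}$, so $\Sigma(u)=\Sigma\big(d_g(x,\cdot)\big)$ lies at positive distance from $S_\e$ and $\mbox{Cut}(u)=\overline{\Sigma(u)}=\mbox{Cut}(x)$. Moreover, writing $z\in S_\e$ as $z=\exp_x(\e v)$ with $v\in\mathcal S_1^x$, the differential $du(z)$ is the covector dual to the radial unit vector $\dot\gamma_v(\e)$, and since the Hamiltonian flow of $H$ is the cogeodesic flow, the exponential map of the Dirichlet problem satisfies $\exp(z,t)=\gamma_v(\e+t)$ for every admissible $t$. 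Combining these two facts gives, for all $v\in\mathcal S_1^x$ and $z=\exp_x(\e v)$,
$$
t_{cut}(z)=t^x_{cut}(v)-\e ,
$$
which is legitimate because $t^x_{cut}(v)\geq r>\e$. By Lemma \ref{LEMconjcut} the domain of $t_{cut}$ for this problem is all of $S_\e$, which is compact, so Theorem \ref{THMcut} makes $t_{cut}$ globally Lipschitz on $S_\e$. Finally, the map $\Phi:v\mapsto\exp_x(\e v)$ is a smooth diffeomorphism from the compact manifold $\mathcal S_1^x$ onto $S_\e$ with uniformly bounded differential and inverse differential, whence $t^x_{cut}=\e+t_{cut}\circ\Phi$ is Lipschitz on $\mathcal S_1^x$.

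The point requiring the most care --- and the only genuine obstacle --- is that Theorem \ref{THMcut} and the material of Sections 1--3 are stated for open subsets of $\R^n$, whereas here $\Omega$ sits inside the manifold $M$. One must either observe that the whole development (viscosity solution and properties (P1)--(P7), Lemma \ref{lem:conj1}, Theorems \ref{THMconjLIP} and \ref{THMcut}) carries over verbatim to open subsets of a smooth manifold, since every estimate is local and the Lipschitz constants it produces depend only on local data, or isometrically embed $M$ into some $\R^N$ and argue there; either way this reduction has to be set up cleanly. Granting it, the identifications $u=d_g(x,\cdot)-\e$, $\mbox{Cut}(u)=\mbox{Cut}(x)$ and $\exp(z,t)=\gamma_v(\e+t)$ are routine consequences of standard Riemannian geometry and the characterisation (\ref{DEFu}). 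I would also point the reader to \cite{riffordsard}, where precisely this reduction of the Itoh--Tanaka theorem to the Li--Nirenberg-type statement is carried out.
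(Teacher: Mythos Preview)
Your proposal is correct and is precisely the approach the paper indicates: the paper does not spell out a proof but merely states that Theorem~\ref{THMRiem1} ``can be seen (see \cite{riffordsard}) as a consequence of Theorem~\ref{THMcut}'', and your argument is exactly this reduction---setting up the Dirichlet problem on $\Omega=M\setminus\overline{B_g(x,\e)}$ with $H(y,p)=\tfrac12\|p\|^2-\tfrac12$, identifying $u=d_g(x,\cdot)-\e$, matching $\mbox{Cut}(u)$ with $\mbox{Cut}(x)$ and the two exponential maps, and concluding via the diffeomorphism $v\mapsto\exp_x(\e v)$. You also correctly flag the only nontrivial point, namely the passage from $\R^n$ to a manifold, and cite the same reference \cite{riffordsard} the paper invokes for it.
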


We denote by $\exp_x: T_xM \rightarrow \R$ the Riemannian exponential mapping from $x$. Since $M$ is assumed to be compact, it is well-defined and smooth on $T_xM$. We recall that $\exp_x$ is said to be singular at $w\in T_xM$ if $d\exp_x(w)$ is singular.   The \textit{distance function to the conjugate locus} (from $x$) $t_{conj}^x:\mathcal{S}_1^x \rightarrow \R$ is defined by 
$$
t_{conj}^x (v) := \min \left\{  t \geq 0 \ \vert \ \exp_x(t) \mbox{ is singular}\right\}.
$$
The following result, which is new, is an easy consequence of Theorem \ref{THMconjLSC}. 

\begin{theorem}
\label{THMRiem2}
The function $t_{conj}^x$ is locally semiconcave on its domain which is an open subset of $\mathcal{S}_1^x$.
\end{theorem}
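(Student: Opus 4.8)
The plan is to realize $t_{conj}^{x_0}$, after composition with a smooth diffeomorphism and a constant shift, as the distance function to the conjugate locus of a Dirichlet problem of the form (\ref{HJ}), and then invoke Theorem \ref{THMconjLSC}. Let $x_0\in M$ be the point of the statement (we use a subscript to free the letter $x$ for the space variable of a Hamiltonian), fix $v_0\in\mathcal{S}_1^{x_0}$ with $t_{conj}^{x_0}(v_0)<\infty$, and pick $r\in\big(0,\mathrm{inj}(x_0)\big)$, where $\mathrm{inj}(x_0)>0$ is the injectivity radius of $M$ at $x_0$. Since $\mathrm{inj}(x_0)\leq t_{conj}^{x_0}(w)$ for every $w\in\mathcal{S}_1^{x_0}$, no geodesic issued from $x_0$ has a conjugate point up to time $r$; moreover $S:=\partial B_g(x_0,r)$ is a smooth compact hypersurface, $\Omega:=M\setminus\overline{B_g(x_0,r)}$ is open with compact boundary $S$, and $\psi:\mathcal{S}_1^{x_0}\to S$, $\psi(w):=\exp_{x_0}(rw)$, is a smooth diffeomorphism. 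I would take the (smooth) Hamiltonian $H(x,p):=\frac12\|p\|_x^2-\frac12$, where $\|\cdot\|_x$ is the dual metric norm: it satisfies (H1)--(H3), in particular $H(x,0)=-\frac12<0$, and its Lagrangian is $L(x,v)=\frac12\|v\|_x^2+\frac12$. Because $\frac12\|v\|_x^2+\frac12\geq\|v\|_x$ with equality exactly along unit-speed curves, and because minimizing geodesics issued from $x_0$ remain outside $B_g(x_0,r)$ once they have crossed $S$, one checks that
$$
u(y):=\inf\left\{l(z,y)\ \vert\ z\in S\right\}=d_g(x_0,y)-r\qquad\forall y\in\overline{\Omega},
$$
so that, by Proposition \ref{PROP1}, $u$ is the unique viscosity solution of the corresponding problem (\ref{HJ}).

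Next I would identify the exponential mappings and the conjugate loci. The Hamiltonian flow of $H$ is the cogeodesic flow, and for $z\in S$ the covector $du(z)$ is the outward unit radial one, so the exponential attached to (\ref{HJ}) is $\exp(z,t)=\exp_{x_0}\big((r+t)\psi^{-1}(z)\big)$. By the remark in the excerpt, $d\exp(z,t)$ is singular iff $\frac{\partial\exp}{\partial z}(z,t)$ is, and $\frac{\partial\exp}{\partial z}(z,t)\,\nu$ equals $\widetilde{J}_\nu(t)$, where $\widetilde{J}_\nu$ is the Jacobi field along the radial geodesic through $z$ with $\widetilde{J}_\nu(0)=\nu\in T_zS$ and $\nabla_t\widetilde{J}_\nu(0)=(\mathrm{Hess}\,u)(z)\,\nu$. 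Since the level sets of $u$ are the geodesic spheres centered at $x_0$, $(\mathrm{Hess}\,u)(z)$ restricted to $T_zS$ is the shape operator of $S=\partial B_g(x_0,r)$; hence, after the time shift $s\mapsto s-r$, the fields $\widetilde{J}_\nu$ are exactly the Jacobi fields $J$ along $\gamma_{\psi^{-1}(z)}$ with $J(0)=0$. Together with the fact that $d\exp_{x_0}$ is nonsingular up to radius $r$, this shows that for $w=\psi^{-1}(z)$ and $\tau>r$ the map $d\exp_{x_0}(\tau w)$ is singular iff $d\exp(z,\tau-r)$ is, whence
$$
t_{conj}^{x_0}(w)=r+t_{conj}\big(\psi(w)\big)\qquad\forall w\in\mathcal{S}_1^{x_0},
$$
with the convention $r+\infty=\infty$. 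In particular the domain of $t_{conj}^{x_0}$ is the $\psi$-preimage of the domain of $t_{conj}$, which is open by Theorem \ref{THMconjLIP}.

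It then remains to conclude. Theorem \ref{THMconjLSC} (whose hypotheses are met, $H$ and $S$ being smooth) gives that $t_{conj}$ is locally semiconcave on its open domain; since $\psi$ is a smooth diffeomorphism and adding the constant $r$ is harmless, $t_{conj}^{x_0}=t_{conj}\circ\psi+r$ is locally semiconcave on its domain, straight from the definition of local semiconcavity on a manifold. Here I use that Theorems \ref{THMconjLIP} and \ref{THMconjLSC}, although stated for $\Omega\subset\R^n$, hold as well for $\Omega$ inside a Riemannian manifold: the relevant constructions and proofs are local along a single characteristic and local semiconcavity is a chart-wise notion, so a routine localization applies (one may also simply work in coordinate charts from the outset).

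The main obstacle is not this regularity transfer but the geometric bookkeeping of the two previous paragraphs: checking that $u=d_g(x_0,\cdot)-r$ solves the shifted Dirichlet problem, and, above all, matching the ``$S$-Jacobi fields'' of the geodesic sphere $S=\partial B_g(x_0,r)$ with the Jacobi fields along geodesics from $x_0$ that vanish at $x_0$, so that the conjugate locus of the Dirichlet problem built on $S$ coincides, up to the shift by $r$, with the conjugate locus from $x_0$. Once this identification is secured, Theorem \ref{THMRiem2} follows at once.
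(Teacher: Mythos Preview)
Your proposal is correct and matches the reduction the paper has in mind when it declares Theorem \ref{THMRiem2} an ``easy consequence of Theorem \ref{THMconjLSC}'': take a small geodesic sphere $S$ around $x_0$ as the Dirichlet boundary, shift the Hamiltonian by $\tfrac12$ so that (H3) holds, identify $u=d_g(x_0,\cdot)-r$, and transport the semiconcavity of $t_{conj}$ back to $\mathcal{S}_1^{x_0}$ via the smooth diffeomorphism $\psi$ (the paper spells out exactly this kind of construction only later, in the proof of Theorem \ref{THMdeformation}). One shortcut for your Jacobi-field bookkeeping: since $\exp(z,t)=\exp_{x_0}\!\big((r+t)\,\psi^{-1}(z)\big)$ factors through the diffeomorphism $(z,t)\mapsto (r+t)\,\psi^{-1}(z)$ from $S\times(0,\infty)$ onto $\{v\in T_{x_0}M:\|v\|_{x_0}>r\}$, the differential $d\exp(z,t)$ is singular iff $d\exp_{x_0}$ is singular at $(r+t)\,\psi^{-1}(z)$, which gives the identity $t_{conj}^{x_0}=r+t_{conj}\circ\psi$ directly.
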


We mention that Itoh and Tanaka proved in \cite{it01} the locally Lipschitz regularity of the distance function to the conjugate locus from a point. 

\subsection{}

Let $(M,g)$ be a complete smooth Riemannian manifold. For every $x\in M$, we call \textit{tangent nonfocal domain} of $x$ the subset of $T_xM$ defined by
$$
\mathcal{NF}(x) := \left\{ t v \ \vert \ \|v\|_x=1,  0 \leq t \leq t_{conj}^x (v) \right\}.
$$
By Theorem \ref{THMRiem2}, we know that for every $x\in M$, the set $\mathcal{NF}(x)$ is an open subset of $T_xM$ whose the boundary is given by the "graph" of the function $t_{conj}^x$ which is locally semiconcave on its domain. We call \textit{$C^4$-deformation} of the round sphere $(\S^n,g^{can})$ any Riemannian manifold of the form $(M,g^{\e})$ with $M=\S^n$ and $g^{\e}$ close to $g$ in $C^4$-topology. The strategy that we develop to prove Theorem \ref{THMconjLSC} allows to prove the following result.

\begin{theorem}
\label{THMdeformation}
If $(M,g)$ is a $C^4$-deformation of the round sphere $(\S^n,g^{can})$, then for every $x\in M$, the set $\mathcal{NF}(x)$ is strictly uniformly convex.
\end{theorem}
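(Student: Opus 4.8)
The plan is to run the implicit‑function argument of the proof of Theorem \ref{THMconjLSC}, comparing the data of the perturbed metric with the explicitly solvable situation on $(\S^n,g^{can})$, where $\mathcal{NF}(x)$ is the open ball $B(0,\pi)\subset T_x\S^n$ and hence is strictly uniformly convex with curvature $1/\pi$. The geometric mechanism I would exploit is that the $C^{1,1}$ "barrier" function produced by the Implicit Function Theorem gives, at each boundary point of $\mathcal{NF}(x)$, an \emph{outer} strictly convex cap, and outer enclosing balls at every boundary point are precisely strict uniform convexity.

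First I would reduce, as in the proof of Theorem \ref{THMRiem2}, the conjugate‑time function $t_{conj}^x$ of a $C^4$‑deformation $(\S^n,g)$ of the round sphere to the Dirichlet framework of the paper (take $\Omega=\S^n\setminus B_g(x,\rho)$, $u=d_g^x$), so that Lemma \ref{lem:conj1} and the symplectic machinery of Section \ref{secSYMP} apply along each geodesic from $x$: $t_{conj}^x(v)$ is the first blow‑up time of the symmetric matrices $K(v,t)$, which satisfy $\dot K\ge\delta I>0$, and $K(v,t)-D^2u(v)$ is negative definite for $t\in(0,t_{conj}^x(v))$. Since $g$ is $C^4$‑close to $g^{can}$, sectional curvatures stay positive and bounded away from $0$, so a Bonnet--Myers‑type estimate makes $t_{conj}^x$ finite and bounded on all of $\mathcal{S}_1^x$; thus $\mathcal{NF}(x)$ is a bounded star‑shaped open neighbourhood of $0$. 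For $\bar v\in\mathcal{S}_1^x$, with $\bar t:=t_{conj}^x(\bar v)$ and $h$ a unit kernel vector of $K(\bar v,\bar t)-D^2u(\bar v)$, the function $\Psi(v,t):=\langle(K(v,t)-D^2u(v))h,h\rangle$ is of class $C^{1,1}$ (a $C^4$ metric makes the linearized Hamiltonian system, hence $K$, hence $\Psi$, of class $C^2$ jointly, with norms continuous in $g$), has $\partial_t\Psi\ge\delta>0$, and is $<0$ for $t<t_{conj}^x(v)$; the Implicit Function Theorem produces a $C^{1,1}$ function $\tau_{\bar v}$ near $\bar v$ with $\tau_{\bar v}(\bar v)=\bar t$, $t_{conj}^x\le\tau_{\bar v}$ locally, and $\|\tau_{\bar v}\|_{C^{1,1}}$ controlled by the $C^{1,1}$‑norms of the data, hence continuous in $g$ for the $C^4$‑topology.

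Next I would convert this outer barrier into local convexity. Put $\bar w:=\bar t\,\bar v\in\partial\mathcal{NF}(x)$ (note $\bar w\ne0$) and $C_{\bar v}:=\{w\ne0:\|w\|\le\tau_{\bar v}(w/\|w\|)\}$ near $\bar w$; by $t_{conj}^x\le\tau_{\bar v}$ one has $\mathcal{NF}(x)\subseteq C_{\bar v}$ near $\bar w$, with $\bar w$ on both boundaries. Writing $C_{\bar v}=\{G_{\bar v}\le0\}$ with $G_{\bar v}(w)=\|w\|^2-\tau_{\bar v}(w/\|w\|)^2$, the second fundamental form of $\partial C_{\bar v}$ at $\bar w$ with respect to the outward normal $DG_{\bar v}/|DG_{\bar v}|$ is $|DG_{\bar v}(\bar w)|^{-1}D^2G_{\bar v}(\bar w)$ restricted to $\bar w^{\perp}$, an expression in $\tau_{\bar v}(\bar v)$, $D\tau_{\bar v}(\bar v)$ and $D^2\tau_{\bar v}(\bar v)$ alone. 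On $(\S^n,g^{can})$ one has $\tau_{\bar v}\equiv\pi$, so $G_{\bar v}(w)=\|w\|^2-\pi^2$ and this second fundamental form is $\tfrac1\pi I$ on $\bar w^{\perp}$. By the $C^4$‑continuity above together with compactness of $\S^n$ and of the unit tangent bundle, there is $\epsilon_0>0$ so that $\|g-g^{can}\|_{C^4}<\epsilon_0$ forces the second fundamental form of $\partial C_{\bar v}$ at $\bar w$ to be $\ge\tfrac1{2\pi}I$, uniformly in $x$ and $\bar v$; since $\tau_{\bar v}\in C^{1,1}$ this holds in the a.e./support sense on a full neighbourhood of $\bar w$, so $C_{\bar v}$ is there locally strictly uniformly convex with curvature $\ge\tfrac1{2\pi}$.

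Finally I would conclude: local convexity of $C_{\bar v}$ at $\bar w$ yields a local supporting hyperplane of $C_{\bar v}$, hence of $\mathcal{NF}(x)$, at $\bar w$; as $\bar w$ runs over $\partial\mathcal{NF}(x)$ and $\mathcal{NF}(x)$ is bounded and connected, the Tietze--Nakajima theorem gives that $\mathcal{NF}(x)$ is convex, and the local enclosing balls of radius $2\pi$ furnished by $C_{\bar v}$ at each boundary point give strict uniform convexity with curvature $\ge\tfrac1{2\pi}$. The main obstacle is the regularity bookkeeping of the second paragraph: ensuring that $C^4$‑closeness of metrics propagates to $C^2$‑closeness, jointly in $(v,t)$ and uniformly over the compact sphere bundle, of the matrices $K(v,t)$ and hence of the barriers $\tau_{\bar v}$ up to second order — and matching this against the round‑sphere computation, whose genuine subtlety is that on $(\S^n,g^{can})$ the kernel of $d\exp_x$ at the conjugate point is $(n-1)$‑dimensional, so that $h$ is far from unique and one must check that every admissible barrier still produces the limiting form $\tfrac1\pi I$.
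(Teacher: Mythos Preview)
Your proposal is correct and follows essentially the same route as the paper: produce $C^2$ outer barriers $\tau_{\bar v}\ge t_{conj}^x$ via the Implicit Function Theorem applied to $\Psi(v,t)=\langle(K(v,t)-D^2u(v))h,h\rangle$, verify that on the round sphere each barrier is identically $\pi$, and conclude by $C^4\to C^2$ continuity that the barrier caps remain uniformly strictly convex. The paper carries out precisely the round-sphere verification you flag, via an explicit stereographic computation (choosing the intermediate Dirichlet surface at distance $\pi/2$ rather than at small $\rho$, so that $S$ becomes a hyperplane and the kernel of $K(z,\pi/2)-U(z)$ is visibly the \emph{constant} subspace $e_1^\perp$, whence $\Psi(z,\pi/2)\equiv 0$ for every admissible $h$), and packages your second-fundamental-form/Tietze--Nakajima step as Proposition~\ref{PROPfin}.
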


We provide the proof of this result in the next section.

\subsection{Proof of Theorem \ref{THMdeformation}}

Consider the stereographic projection of the sphere $\S^n \subset \R^{n+1}$ centered at the origin and
of radius $1$ from the north pole onto the space $\R^n \simeq \R^n \times \{0\} \subset \R^{n+1}$.  This is the map $\sigma : \S^n \setminus \{N\} \rightarrow \R^n$ that sends a point $X\in \S^n \setminus \{N\} \subset \R^{n+1}$, written $X=(x,\lambda)$ with $x=(x_1,\cdots, x_n) \in \R^n$ and $\lambda \in \R$, to $y\in \R^n$, where $Y:=(y,0)$ is the point where the line through $N$ and $P$ intersects the hyperplane $\{\lambda=0\}$ in $\R^{n+1}$. That is,
$$
\sigma (X) =  \frac{x}{1-\lambda} \qquad \forall\, X=(x,\lambda) \in \S^n \setminus \{N\} \subset \R^{n+1}.
$$
The function $\sigma$ is a smooth diffeomorphism from $\S^n \setminus \{N\}$ onto $\R^n$. Its inverse is given by
$$
\sigma^{-1} (y) =\left( \frac{2y}{1+|y|^2},  \frac{|y|^2 -1}{1+|y|^2} \right) \qquad \forall\, y \in \R^n,
$$
where $|\cdot|$ denotes the Euclidean norm on $\R^n$. The pushforward of the round metric on $\S^n$ is given by
$$
g_y (v,v) = \frac{4}{(1+|y|^2)^2} |v|^2 \qquad \forall\, y,v \in \R^n.
$$
The metric $g$ is conformal to the Euclidean metric $g^{eucl}(\cdot,\cdot) = \langle \cdot, \cdot \rangle$, that is it satisfies $g=e^{2f}g^{eucl}$ with $f(y)= \log (2) -\log(1+|y|^2)$. Hence the Riemannian connection associated to $g$ is given by 
\begin{eqnarray}
\label{conform}
\nabla^g_V W & = & \nabla^{eucl}_V W + df(V)W + df(W)V - g^{eucl} (V,W) \nabla f.
\end{eqnarray}

Set $\bar{X}:=(\bar{y},0) \in \S^n$ with $\bar{y}:=(-1,0,\cdots ,0)\in \R^n$ and $\bar{V}:=(\bar{v},-1)$ with $\bar{v}:=0\in \R^n$. For each vector $V =(0,v)=(0,v_1,\cdots ,v_n) \in \R^{n+1}$ such that $|V|=|v|=1$ and $|V-\bar{V}| < 1$, the minimizing geodesic on the sphere starting from $\bar{X}$ with initial speed $V$ is given by
$$
\gamma_V(t) = \cos (t) \bar{X}  + \sin(t) V \qquad \forall t \in [0,\pi].
$$
Its projection by stereographic projection is given by 
$$
\theta_V(t) := \sigma \left( \gamma_V(t)\right) = \left( \frac{- \cos(t)}{1- \sin (t) v_n},  \frac{\sin(t) v_1}{1- \sin (t) v_n}, \cdots ,  \frac{\sin(t) v_{n-1}}{1- \sin (t) v_n}  \right).
$$
Therefore, $\theta_V$ is the geodesic starting from $\sigma(\bar{X})=\bar{y}$ with initial speed $v=d\sigma(\bar{X})(V)=: \sigma_*(V)$ in $\R^2$ equipped with the Riemannian metric $g$.  For every $V$ as above, one has 
$$
z^V = (z_1^V, \cdots ,z_n^V) := \theta_V (\pi /2) = \left( 0, \frac{v_1}{1-v_n} , \cdots ,\frac{v_{n-1}}{1-v_n}\right).
$$
There are contained in the hyperplan
$$
S := \left\{ y =(y_1,\cdots ,y_n) \in \R^n \ \vert \ y_1=0\right\}.
$$
Set $\mathcal{V}:= \left\{ V=(0,v) \in \R^{n+1} \ \vert \  |V|=1, \, |V-\bar{V}| < 1\right\}$ and define the mapping $\mathcal{Z} : \mathcal{V} \rightarrow S$ by,
$$
\mathcal{Z} (V) := z^V \qquad \forall V \in \mathcal{V}.
$$
This mapping is one-to-one from $\mathcal{V}$ into its image $\mathcal{S}:=\mathcal{Z} (\mathcal{V}) \subset S$ ; its inverse is given by 
$$
\mathcal{Z}^{-1} (z) = \left(0, \frac{2z_2}{1+|z|^2}, \cdots , \frac{2z_n}{1+|z|^2}, \frac{|z|^2-1}{1+|z|^2}\right).
$$
In particular, we note that for every $V = (0,v_1,\cdots ,v_n) \in \mathcal{V}$, one has 
\begin{eqnarray}
\label{eqv_n}
1+\bigl|z^V\bigr|^2 = \frac{2}{1-v_n}.
\end{eqnarray} 

Let $H:\R^n \times \R^n \rightarrow \R$ be the Hamiltonian canonically associated to the metric $g$, that is, 
$$
H(y,p) = \frac{(1+|y|^2)^2}{8} |p|^2 \qquad \forall\, y,p \in \R^n.
$$
The Hamiltonian system associated to $H$ is given by
\begin{eqnarray}
\label{H99}
\left\{ \begin{array}{ccccl}
\dot y &=& \frac{\partial H}{\partial p}(y,p) &=& \frac{(1+|y|^2)^2}{4} p\\
\dot p&=&-\frac{\partial H}{\partial y}(y,p) &=&-\frac{(1+|y|^2)|p|^2}{2}y.
\end{array}
\right.
\end{eqnarray}
For every $V\in \mathcal{V}$ the solution $(y^V,p^V)$  of (\ref{H99}) starting at $\bigl(\bar{y},p^V(0)=(-v_n,v_1,\cdots ,v_{n-1})\bigr)$ is given by 
$$
\left\{ \begin{array}{rcl}
y^V(t) &= & \theta_V(t) \\
p^V(t) & = & \frac{4\dot{\theta}_V(t)}{\left( 1 + |\theta_V(t)|^2\right)^2}  =  \bigl(\sin(t)-v_n, \cos(t) v_1, \cdots ,\cos(t) v_{n-1} \bigr).
\end{array}
\right.
$$
Set for every $z=(0,z_{n-1}) \in \mathcal{S}$,
$$
\exp (z,s) := \pi \left(  \phi_s^H (z, P(z))\right),
$$
where $P(z)$ is defined by 
$$
P(z) := p^{\mathcal{Z}^{-1}(z)}(\pi/2) = \left(\frac{2}{1+|z|^2},0,\cdots ,0\right).
$$
We denote by $t_{conj}(z)$ the first time $t\geq 0$ such that the mapping $z\mapsto \exp (z,t)$ is singular. The linearized Hamiltonian system along a given solution $(y(t),p(t))$ of (\ref{H99}) is given by 
$$
\left\{ \begin{array}{ccl}
\dot h &=& (1+|y|^2)\langle y, h\rangle p + \frac{(1+|y|^2)^2}{4}q\\
\dot q &=& - \frac{(1+|y|^2)^2|p|^2}{2} h - |p|^2\langle y, h\rangle y - (1+|y|^2)(p\cdot q) y
\end{array}
\right.
$$
We note that $h$ is a Jacobi vector field along the geodesic $t\mapsto y(t)$. As in Lemma \ref{lem:conj1}, we set for every $z\in \mathcal{S}$ and every $s>0$,
$$
J(z,s) := \left\{ \left( \begin{array}{c}
h \\
q
\end{array}
\right) \ \vert \ \phi_s^H (h,q) \in \{0\} \times \R^n \right\},
$$
and we denote by $K(z,s)$ the $n\times n$ symmetric matrix such that 
$$
J(z,s) =  \left\{ \left( \begin{array}{c} h\\ K(z,s)h \end{array}
\right)  \ \vert \ h \in \R^n \right\}.
$$
Let us now compute the mapping $(z,s) \mapsto J(z,s)$.\\

Let $z\in \mathcal{S}$ be fixed and $V=(0,v_1,\cdots ,v_n) \in \mathcal{V}$ be such that $\mathcal{Z}(V)=z$. Set for every $s \geq 0$,
\begin{multline*}
E_1^z(s) := \dot{\theta}_{V}(s+\pi/2) \\
= \left( \frac{\sin (s+\pi/2) - v_n}{( 1- \sin (s+\pi/2)v_n)^2}, \frac{\cos (s+\pi/2)v_1}{( 1- \sin (s+\pi/2)v_n)^2},\cdots ,  \frac{\cos (s+\pi/2)v_{n-1}}{( 1- \sin (s+\pi/2)v_n)^2} \right).
\end{multline*}
Denote by $\{e_1,\cdots ,e_n\}$ the canonical basis of $\R^n$. One check easily that 
\begin{eqnarray}
\label{E1}
E_1^z(0)= \frac{1}{1-v_n}e_1.
\end{eqnarray}
Let $E_2^z(s), \cdots ,E_n^z(s)$ be $(n-1)$ vectors along the curve $\theta_s :s \mapsto \exp (z,s)$ satisfying
\begin{eqnarray}
\label{E2}
E_i^z(0) =e_i \qquad \forall i=2, \cdots ,n,
\end{eqnarray}
and such that $E_1^z,\cdots ,E_n^z$  form a basis of parallel vector fields along $\theta_{z}$. One has 
\begin{eqnarray}
\label{E3}
\dot{E}_1^z(0) = \left(0,\frac{-v_1}{(1-v_n)^2}, \cdots , \frac{-v_{n-1}}{(1-v_n)^2} \right).
\end{eqnarray}
Moreover, thanks to (\ref{conform}), one has
\begin{eqnarray}
\dot{E}_i^z (0) = \frac{v_{i-1}}{1-v_n} e_1 \qquad \forall i=2,\cdots, n.
\end{eqnarray}
Let $(h,q)$ be a solution of the linearized Hamiltonian system along $\theta_{V}$ such that $h(\bar t)=0$ for some $\bar{t}>0$.
Since $E_1^z(t), \cdots ,E_n^z(t)$ form a basis of parallel vector fields along $\theta_{V}$, there are $n$ smooth functions $u_{1}, \cdots , u_n $ such that
\begin{eqnarray}
\label{eqh}
h(t) = \sum_{i=1}^n u_i(t) E_i^z(t) \qquad \forall\, t.
\end{eqnarray}
Hence, since $h$ is a Jacobi vector field along $\theta_{V}$, its second covariant derivative along $\theta_{V}$ is given by
$$
D_t^2 h(t) = \sum_{i=1}^n  \ddot{u}_i(t) E_i^z(t).
$$
Therefore, since $(\R^n,g)$ has constant curvature,  one has
\begin{eqnarray*}
0&=& D_t^2 h + R(h,\dot{\theta}_{V}) \dot{\theta}_{V}  \\
& =& D_t^2 h + g \left(\dot{\theta}_{V},\dot{\theta}_{V}\right) h - g \left(h, \dot{\theta}_{V}\right) \dot{\theta}_{V} \\
 & =& \sum_{i=1}^n \ddot{u}_i(t) E_i^z(t) + \sum_{i=1}^n  u_i(t) E_i^z(t)  - u_1(t) \dot{\theta}_{V}(t) \\
 & =&  \ddot{u}_1(t) E_1^z(t) + \sum_{i=2}^n [\ddot{u}_i(t)+ u_i(t)] E_i^z(t).
\end{eqnarray*}
We deduce that there are $2n$ constants $\lambda_1^i,\lambda_2^i$ with $i=1,\cdots ,n$ such that
$$
\left\{
\begin{array}{rcl}
u_1(t) & = & \lambda_1^1 + \lambda_2^1 ( t-\pi/2) \\
u_i(t) & = & \lambda_1^i \cos (t) + \lambda_2^i \sin (t) \quad \forall i=2,\cdots ,n.
\end{array}
\right.
$$
Moreover, since $h(\bar{t})=0$, one has $u_i(\bar{t})=0$ for all $i$, which yields
$$
\lambda_2^1= -\frac{\lambda_1^1}{\bar{t}-\pi /2} \quad \mbox{ and } \quad \lambda_1^i = -\lambda_2^i \frac{\sin(\bar{t})}{\cos(\bar{t})} \quad \forall i=2,\cdots ,n.
$$
By (\ref{E1}), (\ref{E2}), Since $E_1^z(\pi/2) =  \frac{1}{1-v_n}e_1$ and $E_i^z(\pi/2) =e_i$ for any $i=2,\cdots ,n$,  (\ref{eqh}) yields
$$
h_1(\pi/2) = \frac{\lambda_1^1}{1-v_n}, \qquad 
h_i(\pi/2) = \lambda_2^i \quad \forall i=2,\cdots ,n.
$$
Furthermore, differentiating $h(t) = \sum_{i=1}^n u_i(t) E_i^z(t)$ at $t=\pi/2$, we obtain
\begin{eqnarray*}
\dot{h}_1(\pi/2) & = & \frac{\lambda_2^1}{1-v_n}  + \sum_{i=2}^n \lambda_2^i  \frac{v_{i-1}}{1-v_n} \\
& = & -\frac{\lambda_1^1}{(\bar{t}-\pi/2)(1-v_n)}  + \sum_{i=2}^n \frac{v_{i-1}}{1-v_n} h_i(\pi/2) \\
& = & - \frac{1}{t-\pi/2} h_1(\pi/2) + \sum_{i=2}^n \frac{v_{i-1}}{1-v_n} h_i(\pi/2),
\end{eqnarray*}
and for every $i=2,\cdots,n$,
\begin{eqnarray*}
\dot{h}_i(\pi/2)  & = & -\lambda_1^i - \frac{\lambda_1^1 v_{i-1}}{(1-v_n)^2} \\
& = & \frac{\sin(t)}{\cos(t)} \lambda_2^i - \frac{v_{i-1}}{1-v_n} h_1(\pi/2) \\
& = & \frac{\sin(t)}{\cos(t)} h_i(\pi/2) - \frac{v_{i-1}}{1-v_n} h_1(\pi/2).
\end{eqnarray*}
But one has 
$$
\theta_V (\pi /2) = \left( 0, \frac{v_1}{1-v_n} , \cdots ,\frac{v_{n-1}}{1-v_n}\right),
$$
and 
$$
P_V(\pi/2) = (1-v_n) e_1.
$$
From the linearized Hamiltonian system, one has
$$
q(\pi/2) =  \frac{4}{(1+|z|^2)^2} \dot{h}(\pi/2)  - \frac{4 \langle h(\pi/2), z\rangle}{1+|z|^2} P_V(\pi/2).
$$
Thus we finally obtain that for every $z\in \mathcal{S}$ and any $s\in [0,\pi)$, one has 
$$
K(z,s) = \frac{-4}{\left(1+|z|^2\right)^2}\left( \begin{array}{ccccc}
 1/s & z_2 & \cdots & \cdots & z_n \\
z_2 &  - \frac{\cos(s)}{\sin(s)} & 0 & \cdots & 0\\
z_3 & 0 &  - \frac{\cos(s)}{\sin(s)}  & \cdots & 0 \\
\vdots & \vdots & \ddots & & \vdots \\
z_n & 0 & \cdots & \cdots & - \frac{\cos(s)}{\sin(s)}  
\end{array}
\right).
$$

Let $z\in \mathcal{S}$ be fixed, let us compute $U(z)$. One has $P(z)=(2/(1+|z|^2),0,\cdots ,0)$. Hence one has 
$$
U(z) = \frac{-4}{\left(1+|z|^2\right)^2} \left( \begin{array}{cccc}
0 & z_2 & \cdots & z_n \\
z_2 & 0 &\cdots & 0 \\
\vdots & \vdots & \ddots & \vdots \\
z_n & 0 & \cdots & 0 
\end{array}
\right)
$$

Therefore we deduce that for any $z\in \mathcal{S}$ and $s\in [0,\pi)$, the symmetric matrix $K(z,s)-U(z)$ is given by
\begin{eqnarray}
\label{FORMULAfin}
K(z,s)-U(z) = \frac{4}{\left(1+|z|^2\right)^2}\left( \begin{array}{ccccc}
- 1/s &  0 & \cdots & \cdots & 0 \\
0 &  \frac{\cos(s)}{\sin(s)} & 0 & \cdots & 0\\
0 & 0 &   \frac{\cos(s)}{\sin(s)}  & \cdots & 0 \\
\vdots & \vdots & \ddots & & \vdots \\
0 & 0 & \cdots & \cdots &  \frac{\cos(s)}{\sin(s)}  
\end{array}
\right).
\end{eqnarray}
Moreover, recalling that $t_{conj}:\mathcal{S} \rightarrow \R$ denotes the distance function to the conjugate locus associated with the Dirichlet-type Hamilton-Jacobi equation
\begin{eqnarray*}
\left\{ \begin{array}{rl}
H(x,du(x)) -1/2 = 0, & \qquad \forall x \in \Omega,\\
u(x) = 0, & \qquad \forall x \in \partial \mathcal{S}
\end{array} \right.
\end{eqnarray*}
(where $\Omega$ is an open neighborhood along the geodesic $\theta_{\bar{V}}(\cdot+\pi/2)$), we have 
$$
t_{conj}^{\bar{y}} \bigl( \sigma_* (V)\bigr) = t_{conj} \bigl( \mathcal{Z}(v)\bigr) + \frac{\pi}{2} \qquad \forall V \in \mathcal{V}.
$$

Let us now consider a smooth metric $g^{\epsilon}$ on the sphere $\S^n$ and $x\in \S^n$. By symmetry, we may assume that $x=\hat{X}$. By Proposition \ref{PROPfin}, there is a constant $K>0$ such that, if for any $v \in T_x \S^n$ with $\|v\|_x^{\epsilon}=1$ (here $\|\cdot \|_x^{\epsilon}$ denotes the norm in $T_x\S^n$ associated with $g^{\epsilon}$), there is a function $\tau_v$ of class $C^2$ defined on the unit sphere in $T_x\S^n$ associated with $g^{\epsilon}$ such that
$$
t_{conj}^x(v) = \tau_v(v), \quad t_{conj}^x \leq \tau_v \quad \mbox{ and } \|D^2\tau_v \|_{\infty} < K,
$$
then the set $\mathcal{NF}(x)$ is strict uniformly convex. Let $v\in T_x\S^n$ with $\|v\|_x^{\epsilon}=1$, again by symmetry, we may assume that $v$ is close to $\bar{V}$. Using the stereographic projection as above, we can push the new metric $g'$ into a metric $\tilde{g}$ on $\R^2$ and $v$ into a speed $\tilde{v}$. Thus, we have to show that there is a $C^2$  function $\tau : \tilde{\mathcal{S}}_{\bar{y}}^1 \rightarrow \R$  (where $ \tilde{\mathcal{S}}_{\bar{y}}^1$ denotes the unit sphere at $\bar{y}$ with respect to $\tilde{g}$) such that 
$$
\tilde{t}_{conj}^{\bar{y}} (\tilde{v}) = \tau(\tilde{v}), \quad \tilde{t}_{conj}^{\bar{y}} \leq \tau \quad \mbox{ and } \|D^2 \tilde{\tau} \|_{\infty} < K.
$$ 
For every $v\in  \tilde{\mathcal{S}}_{\bar{y}}^1$, we denote by $\tilde{\theta}_v$ the geodesic (with respect to $\tilde{g}$) starting at $\bar{y}$ with initial speed $v$. Let $\tilde{\mathcal{V}}$ be an open neighborhood of $\tilde{v}$ in  $ \tilde{\mathcal{S}}_{\bar{y}}^1$, set 
$$
\tilde{\mathcal{Z}}(v) :=   \tilde{\theta}_v(\pi/2) \quad \mbox{ and } \quad \tilde{\mathcal{S}} := \left\{ \tilde{\theta}_v(\pi/2) \ \vert \ v \in \tilde{V} \right\}.
$$
As above, if we denote by $\tilde{t}_{conj}$ the distance function to the conjugate locus associated with  the Dirichlet-type Hamilton-Jacobi equation
\begin{eqnarray}
\label{HJfin}
\left\{ \begin{array}{rl}
\tilde{H}(x,du(x)) - 1/2 = 0, & \qquad \forall x \in \Omega,\\
u(x) = 0, & \qquad \forall x \in \partial \tilde{\mathcal{S}}
\end{array} \right.
\end{eqnarray}
(where $\tilde{H}$ denotes the Hamiltonian which is canonically associated with $\tilde{g}$), we have 
$$
\tilde{t}_{conj}^{\bar{y}} ( v ) = \tilde{t}_{conj} \bigl( \tilde{\mathcal{Z}}(v)\bigr) + \frac{\pi}{2} \qquad \forall v \in \tilde{\mathcal{V}}.
$$
Set $\tilde{z}:=\tilde{\mathcal{Z}} (\tilde{v})$. Therefore, we have to show that there is a function $\tau : \tilde{\mathcal{S}} \rightarrow \R$ of class $C^2$ such that
 $$
\tilde{t}_{conj} (\tilde{z}) = \tau(\tilde{z}), \quad \tilde{t}_{conj} \leq \tau \quad \mbox{ and } \|D^2 \tilde{\tau} \|_{\infty} \mbox{ small enough}.
$$ 
Denote by $\tilde{K}$ and $\tilde{U}$ the functions associated with (\ref{HJfin}) which have been defined in Section 2. Let $\tilde{s}>0$ and $h\in \R^n$ with $\|h\|=1$ be such that $d\exp (\tilde{z},\tilde{s})(h)=0$. By Lemma  \label{lem:conj1}, this means that 
$$
\langle \bigl[ \tilde{K}(\tilde{z},\tilde{s}) - U(\tilde{z})\bigr] h,h\rangle =0.
$$
As in the proof of Theorems \ref{THMconjLIP} and \ref{THMconjLSC} , we define a function $\tilde{\Psi}$ in a neighborhood of $(\tilde{z},\tilde{s})$ by 
$$
\tilde{\Psi} (z,s) := \langle \bigr[ \tilde{K}(z,s) - U(z)\bigr] h,h\rangle.
$$
As above, the Implicit Function Theorem will provide a function $\tilde{\tau}$ defined in a neighborhood of $\tilde{z}$ such that 
$$
\tilde{\Psi} (z,\tilde{\tau}(z))=0 \qquad \forall z.
$$
Using (\ref{FORMULAfin}), we define the function $\Psi$ in a neighborhood of $(\bar{z},\pi/2)$ by
$$
\Psi (z,s) := \langle [ K(z,s)-U(z)] h,h\rangle.
$$
If the metric $g^{\epsilon}$ is close to the metric $g^{can}$ on $\S^n$ for the $C^4$ topology, then the function $\tilde{Psi}$ (which depends upon $g^{\epsilon}$) will $C^2$ close (up to a change of variables between $\mathcal{S}$ and $\tilde{\mathcal{S}}$) to the function $\Psi$. Using the fact that the first and second derivatives in the $z$ variable of $\Psi$ vanish at  time $\pi/2$, we leave the reader to conclude that the function $\tilde{\tau}$ provided by the Implicit Function Theorem is flat enough. This concludes the proof of Theorem  \ref{THMdeformation}.

\section{Comments}

\subsection{} In dimension $2$, the mapping $t_{conj}$ can be shown to be of class $C^{k-2,1}$ on its domain.

\subsection{} The proof of Theorem  \ref{THMcut} (see first case in its proof) shows that, if the datas are of class at least $C^{3,1}$, then the function $t_{cut}$ is locally semiconcave on any open set $S\subset \mathcal{S}_1^x$ satisfying 
$$
\diam \bigl( \partial d_g^x (\exp_x(t_{cut}^x (v)v) ) \bigr) >0 \qquad \forall v \in S.
$$
This kind of result has been used by Loeper and Villani \cite{lv08} in the context of optimal transportation theory. We mention that, given a general smooth compact Riemannian manifold, we do not know if the functions $t_{cut}^x$ are locally semiconcave on $\mathcal{S}_x^1$. 

\subsection{} Our result concerning the strict uniform convexity of nonfocal domains for small deformation of the round spheres is motivated by regularity issues in optimal transportation theory, see \cite{fr08,frv}.

\subsection{} In the present paper, we deduce Theorem \ref{THMdeformation} as a corollary of our results concerning viscosity solutions of Hamiltonian-Jacobi equations. In other terms, we used the symplectic viewpoint. We mention that Theorem \ref{THMdeformation} could as well be obtained with a purely Riemannian approach using some special properties of Jacobi fields, see \cite[Chapter 14, Third Appendix]{villaniSF}.

\appendix 

\section{Strictly uniformly convex sets}

Let $n\geq 2$ be fixed; in the sequel, if $A$ is a given subset of $\R^n$, we denote by $d(\cdot,A)$ the distance function to $A$. Following \cite[Appendix B]{lv08}, a natural notion of uniformly convex set is given by the following: 

\begin{definition}
\label{DEFconvex}
A compact set $A\subset \R^n$ is said to be strictly uniformly convex if there is $\kappa>0$ such that
\begin{eqnarray}
\label{EQconvex}
d(\lambda x + (1-\lambda) y,\partial A) \geq \kappa \lambda (1-\lambda) |x-y|^2.
\end{eqnarray}
\end{definition}

The following proposition more or less well-known gives a local characterization of strictly uniformly convex sets. We refer the reader to \cite[Appendix B]{lv08} for its proof.

\begin{proposition}
Let $A$ be a compact subset of $\R^n$ which Lipschitz boundary. Then the two following properties are equivalent: 
\begin{itemize}
\item[(i)] $A$ is strictly uniformly convex; 
\item[(ii)] there is $\kappa >0$ such that for every $x\in \partial A$, there are $\delta_x >0$  and $z_x\in \R^n$ with $|z_x-x| = 1/\kappa$ satisfying 
$$
A \cap B(x,\delta_x) \subset B(z_x,1/\kappa).
$$
\end{itemize}
\end{proposition}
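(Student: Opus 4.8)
\emph{Proof plan.} Both implications reduce to one elementary fact about chords of a Euclidean ball; the plan is to handle $(i)\Rightarrow(ii)$ by a direct midpoint estimate, and $(ii)\Rightarrow(i)$ by first establishing convexity of $A$ and then globalizing the local ball condition.

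For $(i)\Rightarrow(ii)$ I would first note that strict uniform convexity forces $A$ to be convex: if $x,y\in A$ and the segment $[x,y]$ exited $A$, then, $A$ being closed, it would cross $\partial A$ at an interior point $\lambda_0 x+(1-\lambda_0)y$ with $\lambda_0\in(0,1)$, where the left-hand side of (\ref{EQconvex}) vanishes while the right-hand side is positive, which is impossible. Hence $A$ is compact convex and carries a unit outer normal $\nu$ at each $x\in\partial A$, with $A\subset\{w:\langle w-x,\nu\rangle\le 0\}$. Next, for $y\in A$, $y\ne x$, apply (\ref{EQconvex}) with $\lambda=1/2$: the midpoint $m=(x+y)/2$ satisfies $d(m,\partial A)\ge\tfrac{\kappa}{4}|x-y|^2=:r>0$, so $m\in\inter A$ and the closed ball $\overline{B}(m,r)$ lies in $A$; in particular $m+r\nu\in A$, whence $\langle m+r\nu-x,\nu\rangle\le 0$, that is $\langle y-x,\nu\rangle\le-\tfrac{\kappa}{2}|x-y|^2$. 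Completing the square, this says exactly that $y\in\overline{B}(x-\tfrac1\kappa\nu,\tfrac1\kappa)$. So $z_x:=x-\tfrac1\kappa\nu$ (with $|z_x-x|=1/\kappa$) together with any $\delta_x>0$ does the job, and in fact the inclusion $A\subset\overline{B}(z_x,1/\kappa)$ holds globally.

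For $(ii)\Rightarrow(i)$ the plan has three steps. First, by compactness of $\partial A$, take the radius $R:=1/\kappa$ and a threshold $\delta>0$ uniform in $x$. Second, show $A$ is convex: the inclusion $A\cap B(x,\delta_x)\subset\overline{B}(z_x,R)$ confines $A$ locally to the supporting half-space with normal $\nu_x:=(x-z_x)/R$, so $A$ is "locally convex" at each boundary point, and, the boundary being Lipschitz, a connectedness argument promotes this to convexity of $A$. Third — and this is where I expect the real work — globalize the ball condition: a compact convex set admitting an exterior tangent ball of radius $R$ at every boundary point satisfies $A\subset\overline{B}(z_x,R)$ for \emph{every} $x\in\partial A$ (equivalently, $A$ is the intersection of all radius-$R$ balls containing it). A clean route is: fix $x_0\in\partial A$, suppose some $p\in A$ has $|p-z_{x_0}|>R$, and run the upward parabola $t\mapsto|x_0+t(p-x_0)-z_{x_0}|^2$ along $[x_0,p]\subset A$, combining it with the local inclusions at nearby boundary points to reach a contradiction; this is carried out in \cite[Appendix B]{lv08}, which we follow. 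Granting the global inclusions, fix $x,y\in A$, $\lambda\in[0,1]$, put $m:=\lambda x+(1-\lambda)y\in A$, and let $q\in\partial A$ be arbitrary. From the identity $|m-z_q|^2=\lambda|x-z_q|^2+(1-\lambda)|y-z_q|^2-\lambda(1-\lambda)|x-y|^2\le R^2-\lambda(1-\lambda)|x-y|^2$ one gets
\[
|m-q|\ \ge\ |z_q-q|-|z_q-m|\ \ge\ R-\sqrt{R^2-\lambda(1-\lambda)|x-y|^2}\ \ge\ \frac{\lambda(1-\lambda)|x-y|^2}{2R},
\]
and taking the infimum over $q\in\partial A$ yields $d(m,\partial A)\ge\tfrac{\kappa}{2}\lambda(1-\lambda)|x-y|^2$, i.e. (\ref{EQconvex}) with constant $\kappa/2$.

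The genuine obstacle is therefore the globalization in step three of $(ii)\Rightarrow(i)$ — equivalently, the sub-lemma that $(ii)$ already forces convexity; everything else is the one-line chord computation displayed above. I would carry out that step exactly as in \cite[Appendix B]{lv08} and keep the remainder self-contained.
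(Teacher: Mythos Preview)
The paper does not supply its own proof of this proposition; it simply refers the reader to \cite[Appendix B]{lv08}, which is exactly the source you invoke for the one nontrivial step (the globalization of the local exterior-ball condition). Your explicit midpoint argument for $(i)\Rightarrow(ii)$ and the chord computation for $(ii)\Rightarrow(i)$ are correct, so your proposal is strictly more detailed than, and fully consistent with, the paper's treatment.
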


As a corollary, one has the following result. 

\begin{proposition}
\label{PROPfin}
Let $\mathcal{T}: \S^{n-1} \rightarrow \R$ be a Lipschitz function, set 
$$
A_{\mathcal{T}}:= \left\{ t \mathcal{T}(v) v \ \vert \ v \in \S^{n-1}, \, t\in [0,1]  \right\} \subset \R^n.
$$
There is $K > 0 $ such that if, for every $v\in \S^{n-1}$, there is a function $\tau :\S^{n-1} \rightarrow \R$ of class $C^2$ satisfying $\tau(v)=\mathcal{T}(v), \mathcal{T}\leq \tau$ and $\|D^2\tau\|_{\infty} \leq K$, then the set $A_{\mathcal{T}}$ is strictly uniformly convex.
\end{proposition}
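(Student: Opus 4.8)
The plan is to verify property (ii) of the preceding Proposition for $A:=A_{\mathcal{T}}$; recall that $0$ is an interior point of $A$ since $m:=\min_{\S^{n-1}}\mathcal{T}>0$. First observe that the hypothesis forces $\mathcal{T}$ to be semiconcave on $\S^{n-1}$ with constant $K/2$: for each $v$ the associated $C^2$ function $\tau$ gives, in a chart, $\mathcal{T}(\cdot)\le\tau(\cdot)\le\mathcal{T}(v)+\langle D\tau(v),\cdot-v\rangle+\tfrac{K}{2}|\cdot-v|^2$. Being semiconcave and bounded between $m$ and $M:=\max_{\S^{n-1}}\mathcal{T}$, the function $\mathcal{T}$ is therefore Lipschitz with a constant $L$ depending only on $n,K,m,M$; in particular $\partial A=\{\mathcal{T}(v)v:v\in\S^{n-1}\}$ is a Lipschitz hypersurface and the Proposition applies. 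The same inequality, written at a point $\bar v$ where $\tau$ touches $\mathcal{T}$, also yields $|\nabla\tau(\bar v)|\le L$.

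Now fix $x_0\in\partial A$, write $x_0=\mathcal{T}(\bar v)\bar v$, and let $\tau$ be the $C^2$ function from the hypothesis at $\bar v$. Put $A_\tau:=\{t\tau(v)v:v\in\S^{n-1},\,t\in[0,1]\}$; since $m\le\mathcal{T}\le\tau$ we have $A\subseteq A_\tau$, and since $\tau(\bar v)=\mathcal{T}(\bar v)$ the point $x_0$ lies on $\partial A_\tau$ as well. Hence it suffices to find, with $\kappa>0$ independent of $x_0$, some $\delta>0$ (which may depend on $x_0$) and $z_0$ with $|z_0-x_0|=1/\kappa$ and $A_\tau\cap B(x_0,\delta)\subseteq B(z_0,1/\kappa)$. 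Parametrize $\partial A_\tau$ near $x_0$ by $\Phi(w):=\tau(E_{\bar v}(w))\,E_{\bar v}(w)$, where $E_{\bar v}$ is the exponential map of $\S^{n-1}$ at $\bar v$ and $w$ runs over a neighbourhood of $0$ in $T_{\bar v}\S^{n-1}$, so that $\Phi(0)=x_0$. Writing $c:=\tau(\bar v)\in[m,M]$ and using $E_{\bar v}(w)=\bar v+w-\tfrac12|w|^2\bar v+O(|w|^3)$ together with the normal–coordinate identity $D^2(\tau\circ E_{\bar v})(0)=\nabla^2\tau(\bar v)$, a direct computation gives
\begin{align*}
D\Phi(0)[w] &= c\,w+\langle\nabla\tau(\bar v),w\rangle\,\bar v,\\
D^2\Phi(0)[w,w] &= \bigl(\langle\nabla^2\tau(\bar v)w,w\rangle-c|w|^2\bigr)\bar v+2\langle\nabla\tau(\bar v),w\rangle\,w.
\end{align*}
Thus $D\Phi(0)$ is injective, the outward unit normal to $\partial A_\tau$ at $x_0$ is $\nu=N^{-1}\bigl(\bar v-\tfrac1c\nabla\tau(\bar v)\bigr)$ with $N=(1+|\nabla\tau(\bar v)|^2/c^2)^{1/2}$, and $\langle w,\nu\rangle=-(cN)^{-1}\langle\nabla\tau(\bar v),w\rangle$.

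Consequently the second fundamental form of $\partial A_\tau$ at $x_0$ with respect to the inward normal $-\nu$, evaluated on $w$, is
\begin{align*}
\mathrm{II}(w,w) &= -\langle D^2\Phi(0)[w,w],\nu\rangle\\
&= \frac1N\Bigl(c|w|^2-\langle\nabla^2\tau(\bar v)w,w\rangle+\tfrac2c\langle\nabla\tau(\bar v),w\rangle^2\Bigr)\ \ge\ \frac{c-K}{N}\,|w|^2.
\end{align*}
The key point is that the first–order terms of $\Phi$ enter $\mathrm{II}$ with the favourable sign, so only the bound $\|\nabla^2\tau\|_\infty\le K$ is needed; choosing $K<m/2$ and using $c\ge m$ and $|\nabla\tau(\bar v)|\le L$ we obtain $\mathrm{II}(w,w)\ge c_1|w|^2$ with $c_1:=\tfrac{m}{2}(1+L^2/m^2)^{-1/2}>0$ depending only on $n,m,M$, while $|D\Phi(0)[w]|^2=c^2|w|^2+\langle\nabla\tau(\bar v),w\rangle^2\le(M^2+L^2)|w|^2$.

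Finally, set $R:=1/\kappa$ with $\kappa$ small enough (depending only on $n,m,M$) that $R>(M^2+L^2)/c_1$ and $R>MN/2$, and put $z_0:=x_0-R\nu$, so $|z_0-x_0|=R=1/\kappa$. For $y=\Phi(w)$ one has $|y-z_0|^2=|y-x_0|^2+2R\langle y-x_0,\nu\rangle+R^2$; since $D\Phi(0)[w]\perp\nu$, a second–order Taylor expansion of $\Phi$ at $0$ (legitimate since $\tau$ is $C^2$, the remainder being allowed to depend on $x_0$) gives $\langle y-x_0,-\nu\rangle=\tfrac12\mathrm{II}(w,w)+o(|w|^2)\ge\tfrac{c_1}{2}|w|^2-o(|w|^2)$ and $|y-x_0|^2\le(M^2+L^2)|w|^2+o(|w|^2)$, whence $|y-z_0|\le R$ for $|w|$ smaller than some $\delta'$. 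Moreover $0\in B(z_0,R)$, because $|z_0|<R$ is equivalent to $c=|x_0|<2R\langle\bar v,\nu\rangle=2R/N$, which holds. Since $A_\tau$ is star-shaped with respect to $0$ and, for $\delta$ small enough, every point of $A_\tau\cap B(x_0,\delta)$ lies on a radial segment from $0$ to a boundary point $\Phi(w)$ with $|w|<\delta'$, convexity of the ball $B(z_0,R)$ yields $A_\tau\cap B(x_0,\delta)\subseteq B(z_0,1/\kappa)$, and therefore $A\cap B(x_0,\delta)\subseteq B(z_0,1/\kappa)$. As $\kappa$ does not depend on $x_0$, property (ii) holds and $A_{\mathcal{T}}$ is strictly uniformly convex. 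The main obstacle is the computation of $\mathrm{II}$ for $\partial A_\tau$ and the observation that its first–order terms are harmless, together with the bookkeeping guaranteeing that $K$, and hence $\kappa$, depend only on $n$ and on $\min\mathcal{T},\max\mathcal{T}$ — in particular the remark that the hypothesis renders $\mathcal{T}$ semiconcave, hence Lipschitz with a controlled constant.
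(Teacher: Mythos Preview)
The paper does not actually supply a proof of this proposition: it is stated as a corollary of the preceding local characterisation of strict uniform convexity and left to the reader. Your argument therefore cannot be compared line by line to a paper proof, but it is precisely the kind of argument the authors have in mind, carried out in full detail, and it is correct.

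Your route is the natural one: verify criterion~(ii) at each boundary point $x_0=\mathcal{T}(\bar v)\bar v$ by replacing $A_{\mathcal T}$ locally by the larger $C^2$ star-shaped body $A_\tau$ (using the touching function $\tau$ at $\bar v$), and then find the inscribed ball for $A_\tau$ by computing the second fundamental form of $\partial A_\tau$ at $x_0$. The computation of $D\Phi(0)$, $D^2\Phi(0)$, the outward normal $\nu$, and $\mathrm{II}(w,w)$ is correct, and the key observation---that the first-order contribution $\tfrac{2}{c}\langle\nabla\tau(\bar v),w\rangle^2$ enters with the favourable sign so that only the bound on $\|\nabla^2\tau\|_\infty$ matters---is exactly the point. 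Two small remarks: (a) the assumption $m=\min\mathcal{T}>0$ is implicit in the paper's use of the proposition (for nonfocal domains) and is indeed needed for the argument; (b) the passage from ``$y=\Phi(w)\in B(z_0,R)$ for $|w|<\delta'$'' to ``$A_\tau\cap B(x_0,\delta)\subset B(z_0,R)$'' via star-shapedness and $0\in B(z_0,R)$ is fine, since points of $A_\tau$ near $x_0$ are bounded away from the origin and hence have direction $v$ close to $\bar v$, so the corresponding $w$ is small. Your bookkeeping showing that $K$ may be taken to depend only on $n,\min\mathcal{T},\max\mathcal{T}$ (through the semiconcavity $\Rightarrow$ Lipschitz step) is a bit more than the statement strictly requires, but it is correct and useful.
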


\end{document}